\newtheorem{thm}{Theorem}[section]
\newtheorem{prop}[thm]{Proposition}
\newtheorem{lem}[thm]{Lemma}
\newtheorem{cor}[thm]{Corollary}
\theoremstyle{definition}
\newtheorem{defn}[thm]{Definition}
\newtheorem{ex}[thm]{Example}
\theoremstyle{remark}
\newcommand{\Tt}{\mathbb T}
\renewcommand{\d}{\mathrm d}
\newcommand{\R}{\mathcal{R}}
\newcommand{\set}[1]{\left\{#1\right\}}
\newcommand{\eval}[1]{\left\langle#1\right\rangle}
\newcommand{\brr}[1]{\left[#1\right]}
\newcommand{\X}{\ensuremath{\mathfrak{X}}}
\newcommand{\F}{\ensuremath{\mathcal{F}}}
\newcommand{\ds}{\displaystyle}
\renewcommand{\d}{\mathrm d}               
\newcommand{\Lie}{\boldsymbol{\pounds}}    
\newcommand{\smalcirc}{\mbox{\,\tiny{$\circ $}\,}}  
\DeclareMathOperator{\red}{red}         
\DeclareMathOperator{\graf}{graph}			
\DeclareMathOperator{\ch}{char}         
\DeclareMathOperator{\modular}{mod}     
\renewcommand{\mod}{\modular}
\renewcommand{\top}{\text{\rm top}}
\newcommand{\al}{\alpha}
\newcommand{\be}{\beta}
\renewcommand{\gg}{\mathfrak{g}}        
\newcommand{\B}{\mathcal{B}}
\DeclareMathOperator{\Rep}{Rep}         
\begin{document}

\title{The modular class of a Dirac map}

\author{Raquel Caseiro}
\address{CMUC, Department of Mathematics, University of Coimbra
}
\email{raquel@mat.uc.pt}

\thanks{This work was partially supported by the Centre for Mathematics of the
University of Coimbra -- UID/MAT/00324/2013, funded by the Portuguese
Government through FCT/MEC and co-funded by the European Regional Development Fund through the Partnership Agreement PT2020.}


\begin{abstract}
In this paper we study the modular classes of Dirac manifolds and of Dirac maps, and we discuss their basic properties. We apply these results
to explain the relationship between the modular classes of the various structures involved in the reduction of a Poisson manifold under the action
by of a Poisson Lie group.
\end{abstract}
\subjclass[2010]{53D17, 58H05}
\keywords{Lie algebroid, Modular class, Dirac manifold}

\maketitle



\section{Introduction}

The modular class of a Poisson manifold \cite{Weinstein1} is a class lying in the first Poisson cohomology group, that obstructs the existence of a volume form invariant under all Hamiltonian vector fields. More generally, the modular class of a Lie algebroid $A$ can be introduced by considering the fiberwise linear Poisson structure on the dual bundle $A^*$. The resulting class, denoted $\mod A$, lives in the first Lie algebroid cohomology of $A$ with trivial coefficients.  In \cite{ELW}, the authors introduced a certain representation of the Lie algebroid $A$ and showed that the modular class $\mod A$ could be seen as a characteristic class of this representation. This approach shed some new light into this concept and led to many other developments. For example, using this approach, in \cite{GMM, KLW} the authors defined the modular class of a Lie algebroid morphism $\Phi:A\to B$ and showed that it is the obstruction to the existence of a canonical relation, given by $\Phi$, between the modular classes of $A$ and of $B$. More recently, the modular class of a Poisson map was introduced in \cite{CLF} and then extended to any Lie algebroid comorphism in \cite{RC} and to skew-algebroid relations in \cite{Grab}.

In the present paper we study the modular classes of Dirac manifolds and Dirac maps. A Dirac manifold  $(M,L)$ is a manifold $M$ equipped with an integrable isotropic subbundle $L\subset TM\oplus T^*M$ of the generalized tangent bundle. Geometrically, they correspond to (possibly singular) presymplectic foliations of $M$. The Dirac structure $L$ has the structure of a Lie algebroid so it has automatically a modular class. A Dirac structure $L$ is said to be reducible if its characteristic distribution $L\cap TM$ is regular and the associate foliation $\F$ is simple, so that the leaf space $M/\F$ is smooth. A reducible Dirac structure $L$ induces a Poisson structure on $M/\F$ and it is completely determined by it: in fact, we will see that $L$ is the pullback Lie algebroid of $T^*(M/\F)$ and this allows us to conclude that the modular class of $L$ is the pullback of the modular class of the Poisson manifold $M/\F$.

Next, we turn to morphism between Dirac manifolds are known as  Dirac maps. There are two types of morphisms: ``forward'' and ``backward'' Dirac maps. These are in fact canonical relations, in the sense of Weinstein, so inspired by the work of \cite{Grab} we define their modular classes.  We show that they are the obstruction to the existence of a canonical relation between the modular classes of the Dirac manifolds involved. Our definition somehow generalizes the modular classes of both Lie algebroid morphisms and comorphisms.

We end the paper by applying our results on the modular class of Dirac manifolds and Dirac maps to study the behaviour of the modular class of a Poisson manifold under reduction by an hamiltonian action of a Poisson Lie group. This case was already considered in \cite{CLF}, but we will see that our approach using Dirac geometry gives a better understanding of the relationship between the modular classes involved.


\section{Modular classes on Lie algebroids}

\label{subsec:mod:Lie:algbrd}                                  %


In this section we begin by recalling some facts about Lie algebroids  and their modular classes (see, e.g., \cite{Mackenzie}, \cite{YKS}).

\subsection{Modular class of Lie algebroids}
Let $A\to M$ be a Lie algebroid over $M$, with anchor $a:A\to TM$
and Lie bracket $[\cdot,\cdot]:\Gamma(A)\times\Gamma(A)\to\Gamma(A)$.
We will denote by
$\Omega^k(A)\equiv\Gamma(\wedge^k A^*)$ the $A$-forms and by $\X^k(A)\equiv\Gamma(\wedge^k A)$
the $A$-multivector fields.
Given a section $\Pi\in \X^2(A)$, we denote by $\Pi^\sharp$ the bundle map $A^*\to A$ defined by
\[
\eval{\Pi^\sharp (\al),\be}=\Pi(\al,\be).
\]

The Lie algebroid structure on $A$, defines a differential $\d_A$ on $\Omega^\bullet(A)$ and  the
cohomology of the complex $(\Omega^\bullet(A),\d_A)$ is called Lie algebroid cohomology of $A$ (with trivial coefficients)
and will be denoted by $H^\bullet(A)$.


A representation of $A$ is a vector bundle $E\to M$ together with a flat $A$-connection $\nabla$ (see, e.g,
\cite{Fernandes1}).
The usual operations $\oplus$ and $\otimes$ on vector bundles turn the space of
representations $\Rep(A)$ into a semiring.
Given a morphism of Lie algebroids:
\[
\xymatrix{ A\ar[r]^{\Phi}\ar[d]& B\ar[d]\\
M\ar[r]_{\phi}& N}
\]
there is a pullback operation on representations $E\mapsto \phi^{!}E$, which gives a morphism of
rings $\phi^{!}:\Rep(B)\to\Rep(A)$.

Representations have characteristic classes (see, e.g., \cite{Crainic}). Here we are
interested in line bundles $L\in\Rep(A)$ for which the only characteristic class can be obtained as follows:
Assume first that $L$ is orientable, so that it carries a nowhere vanishing section $\mu\in\Gamma(L)$. Then:
\[ \nabla_X \mu=\langle \al_\mu,X\rangle\mu,\quad  X\in\X(A).\]
The 1-form $\al_\mu\in\Omega^1(A)$ is $\d_A$-closed and  is called the {characteristic cocycle} of the
representation $L$. Its cohomology class is independent of the choice of section $\mu$ and
defines the characteristic class of the representation $L$:
\[ \ch(L):=[\al_\mu]\in H^1(A).\]
One checks easily that if $L,L_1,L_2\in\Rep(A)$, then:
\[ \ch(L^*)=-\ch(L),\qquad \ch(L_1\otimes L_2)=\ch(L_1)+\ch(L_2).\]
Also, if $(\Phi,\pi):A\to B$ is a morphism of Lie algebroids, and $L\in\Rep(B)$ then:
\[ \ch(\phi^{!}L)=\Phi^*\ch(L),\]
where $\Phi^*:H^\bullet(B)\to H^\bullet(A)$ is the map induced by $\Phi$ at the level of cohomology.
If $L$ is not orientable, then one defines its characteristic class to be the one half that
of the representation $L\otimes L$, so the formulas above still hold, for non-orientable line bundles.

Every Lie algebroid $A\to M$ has a canonical representation on the line bundle $L_A=\wedge^{\top}A\otimes\wedge^{\top}T^*M$:
\[ \nabla_X (\omega\otimes\mu)=\Lie_X\omega\otimes\mu+\omega\otimes\Lie_{\rho(X)}\mu. \]
Then one sets \cite{Weinstein1,ELW}:
\begin{defn}
The \textbf{modular cocycle} of a Lie algebroid $A$  relative to a nowhere vanishing section
$\omega\otimes\mu\in\Gamma(\wedge^{\top}A\otimes\wedge^{\top}T^*M)$ is the characteristic cocycle
$\al_{\omega\otimes\mu}$ of the representation $L_A$. The \textbf{modular class} of $A$ is the characteristic class:
\[ \mod(A):=[\al_{\omega\otimes\mu}]\in H^1(A).\]
\end{defn}

\begin{ex}
\label{ex:mod:Poisson}
For any Poisson manifold $(M,\pi)$ there is a natural Lie algebroid
structure on its cotangent bundle $T^*M$: the anchor is
$a=\pi^\sharp$ and the Lie bracket on sections of $A=T^*M$, i.e.,
on $1$-forms, is given by:
\[ [\al,\be]_\pi=\Lie_{\pi^\sharp\al}\be-\Lie_{\pi^\sharp\be}\al-d\pi(\al,\be).\]
The Poisson cohomology of $(M,\pi)$ is just the Lie algebroid cohomology
of $T^*M$, and the modular class of the Lie algebroid $T^*M$ is twice the modular class of the Poisson manifold $M$ (see \cite{Weinstein1} for the definition of modular class of a Poisson manifold):
\[ \mod(T^*M)=2\mod(M,\pi).\]
\end{ex}

\subsection{Modular class of morphisms}
If $\Phi:A\to B$ is a morphism of Lie algebroids covering a map $\phi:M\to N$,
the induced morphism at the level of cohomology $\Phi^*:H^\bullet(B)\to H^\bullet(A)$,
in general, does not map the modular classes to each other. Therefore one sets (\cite{GMM, KLW}):

\begin{defn}
The \textbf{modular class} of a Lie algebroid morphism $\Phi:A\to B$ is the cohomology
class defined by:
\[ \mod(\Phi):=\mod(A)-\Phi^*\mod(B)\in H^1(A).\]
\end{defn}


The modular class of a Lie algebroid
morphism $(\Phi,\phi):A\to B$ may also be seen as the characteristic class of a representation (\cite{KLW}):
\[ \mod(\Phi)=\ch(L_A\otimes\phi^{!}(L_B)^*).\]
It is easy to see that Lie algebroid isomorphisms have vanishing modular classes.

Let $A\to N$ be a Lie algebroid over $N$ with anchor $a:A\to TN$ and let $\phi:M\to N$ be a smooth map such that
$$\phi^{!!}A=\set{(X,v)\in T_x M\times A_{\phi(x)}: a(v)=\phi_*X,\, x\in M}$$
is a vector subbundle of $\ds TM\oplus\phi^! A$ (i.e. it has constant rank), where $\phi^!A\to M$ denotes the pullback vector bundle.

The vector bundle  $\ds \phi^{!!}A$ carries a structure of Lie algebroid over $M$, called the \textbf{pullback Lie algebroid of $A$},
given by:
\begin{enumerate}
\item the anchor $a^!:\phi^{!!}A\to TM $ is the  the projection onto the first factor;
\item the Lie bracket on sections is given by:
\begin{align*}
\brr{(X,f\otimes \al)  , (Y,g\otimes \be) }=\brr{X,Y} + fg\otimes \brr{\al,\be}_A + X(g)\,\be- Y(f)\, \al ,
\end{align*}
\end{enumerate}
for $f,g\in C^\infty(M), \al,\be\in\Gamma(A), X,Y\in \X(M).$

This Lie algebroid structure turns the natural bundle map $(\Phi,\phi):\phi^{!!}A\to A$ into a Lie algebroid morphism, so we have a diagram of morphisms:
\begin{equation*}
\xymatrix{
\phi^{!!}A\ar[r]^{\Phi}\ar[dr]_{a^!}& A\\
& TM}
\end{equation*}

\begin{prop}(\cite{KLW})\label{prop:KLW:pullback}
Let $\phi:M\to N$ be a submersion and $A\to N$ a Lie algebroid. Then $\phi^{!!} A$ is a Lie algebroid over $M$ and the modular class of the Lie algebroid morphism $(\Phi,\phi):\phi^{!!}A\to A $ vanishes.
\end{prop}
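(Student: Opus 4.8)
The plan is to prove the two assertions in turn, obtaining the vanishing of the modular class from the representation-theoretic description recalled above, namely $\mod(\Phi)=\ch\bigl(L_{\phi^{!!}A}\otimes\phi^{!}(L_A)^{*}\bigr)$. First I would record what comes for free from $\phi$ being a submersion. The projection $\pr\colon\phi^{!!}A\to\phi^{!}A$, $(X,v)\mapsto v$, is fibrewise surjective (given $v$, choose $X$ with $\phi_{*}X=a(v)$, possible since $\phi$ is a submersion), with kernel the vertical bundle $\Ker\phi_{*}\subseteq TM$. Thus there is a short exact sequence of vector bundles over $M$,
\[
0\longrightarrow \Ker\phi_{*}\longrightarrow \phi^{!!}A\longrightarrow \phi^{!}A\longrightarrow 0 ,
\]
so $\phi^{!!}A$ has constant rank $\dim M-\dim N+\rank A$; in particular it is an honest vector bundle and the bracket in the statement is well defined on its sections, making it the pullback Lie algebroid with anchor $a^{!}=\pr_{TM}$ and with $(\Phi,\phi)$ a Lie algebroid morphism.

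Next I would produce a canonical trivialization of the line bundle whose characteristic class is $\mod(\Phi)$. Taking top exterior powers in the sequence above and in the tautological sequence $0\to\Ker\phi_{*}\to TM\xrightarrow{\phi_{*}}\phi^{!}TN\to 0$ yields canonical isomorphisms
\[
\textstyle\wedge^{\top}(\phi^{!!}A)\cong\wedge^{\top}(\Ker\phi_{*})\otimes\phi^{!}\!\wedge^{\top}\!A ,\qquad
\wedge^{\top}T^{*}M\cong\wedge^{\top}(\Ker\phi_{*})^{*}\otimes\phi^{!}\!\wedge^{\top}\!T^{*}N .
\]
Since $\wedge^{\top}(\Ker\phi_{*})\otimes\wedge^{\top}(\Ker\phi_{*})^{*}$ is canonically trivial, these assemble into a canonical isomorphism of line bundles
\[
\Theta\colon L_{\phi^{!!}A}=\wedge^{\top}(\phi^{!!}A)\otimes\wedge^{\top}T^{*}M\ \xrightarrow{\ \sim\ }\ \phi^{!}\bigl(\wedge^{\top}A\otimes\wedge^{\top}T^{*}N\bigr)=\phi^{!}L_A ,
\]
so that $\Theta\otimes\mathrm{id}$ identifies $L_{\phi^{!!}A}\otimes\phi^{!}(L_A)^{*}$ with $\phi^{!}(L_A\otimes L_A^{*})$, which is canonically trivial as a vector bundle. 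Let $s$ be the corresponding nowhere-vanishing global section. It then suffices to show that $s$ is flat for the tensor-product $\phi^{!!}A$-representation, equivalently that $\Theta$ intertwines the canonical $\phi^{!!}A$-representation on $L_{\phi^{!!}A}$ with the $\phi^{!}$-pullback of the canonical $A$-representation on $L_A$; for then the characteristic cocycle $\al_s$ vanishes and $\mod(\Phi)=[\al_s]=0$.

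This flatness is local on $M$, so the heart of the proof is a computation in a chart in which $\phi$ is the projection $(y,z)\mapsto z$. I would pick a local frame $(e_i)$ of $A$ near the image point, let $X_i$ be the horizontal ($z$-only) lift of $a(e_i)$, and use $\tilde e_i=(X_i,e_i\circ\phi)$ together with the vertical frame $\bigl(\partial_{y_j},0\bigr)$ as a local frame of $\phi^{!!}A$. Using the bracket formula one finds that in this frame every bracket involving a vertical vector vanishes and $[\tilde e_i,\tilde e_j]=\sum_l(c^{l}_{ij}\circ\phi)\,\tilde e_l$ where $[e_i,e_j]_A=\sum_l c^{l}_{ij}e_l$, i.e.\ the structure functions are the $\phi$-pullbacks of those of $A$; moreover $\Lie_{a^{!}(\tilde e_i)}(\d y\wedge\d z)=\phi^{*}\!\bigl(\Lie_{a(e_i)}\d z\bigr)\,\d y\wedge\d z$ while $\Lie_{\partial_{y_j}}(\d y\wedge\d z)=0$. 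Plugging this into the definition of the modular cocycle shows that, for the adapted frame $\omega\otimes\mu$ of $L_{\phi^{!!}A}$, one has $\al_{\omega\otimes\mu}=\Phi^{*}\al_{\epsilon\otimes\nu}$, where $\epsilon\otimes\nu=(e_1\wedge\dots\wedge e_r)\otimes\d z$ is the matching frame of $L_A$; since $\Theta$ carries $\omega\otimes\mu$ to $\phi^{!}(\epsilon\otimes\nu)$ up to an irrelevant sign, $\al_s=\al_{\omega\otimes\mu}-\Phi^{*}\al_{\epsilon\otimes\nu}=0$ on the chart, hence everywhere.

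The only genuine work is this last local computation, and within it the two bookkeeping points: that the horizontal lift of a frame makes the vertical directions simultaneously ``central'' and divergence-free in an adapted chart, and that the modular cocycle of $\phi^{!!}A$ then splits into the $\phi$-pullbacks of the ``structure-constant'' part and the ``anchor-divergence'' part of the modular cocycle of $A$. Everything else — the two short exact sequences, the cancellation of $\wedge^{\top}(\Ker\phi_{*})$ against its dual, and the reduction of flatness to a local check — is formal, and the submersion hypothesis is used precisely to guarantee fibrewise surjectivity (hence the first sequence) and the local product form of $\phi$.
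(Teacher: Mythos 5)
The paper does not actually prove this proposition: it is quoted verbatim from [KLW] and used as a black box, so there is no in-paper argument to compare against. Your proposal is a correct, self-contained proof, and it follows what is essentially the argument of [KLW]: the submersion hypothesis gives the short exact sequence $0\to\Ker\phi_*\to\phi^{!!}A\to\phi^!A\to 0$, which together with $0\to\Ker\phi_*\to TM\to\phi^!TN\to 0$ produces the canonical identification $L_{\phi^{!!}A}\cong\phi^!L_A$ after the $\wedge^{\top}(\Ker\phi_*)$ factors cancel, and the vanishing of the characteristic cocycle of $L_{\phi^{!!}A}\otimes\phi^!(L_A)^*$ is then checked on an adapted product chart. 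Your local computation is sound; the only step you pass over quickly is the identity $[X_i,X_j]=\sum_l(c^l_{ij}\circ\phi)X_l$ for the horizontal lifts, which does hold because in the product chart the lift of a $z$-only vector field is bracket-compatible and the structure functions $c^l_{ij}\circ\phi$ are constant along the fibres, so the vertical directions are indeed simultaneously central and divergence-free as you claim. With that observation made explicit, the proof is complete and can stand in place of the external citation.
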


This means that $\ds \mod(\phi^{!!}A)=\Phi^*(\mod(A))$. In particular, one can always choose modular cocycles on $\phi^{!!}A$ and on $A$ that are related by $\Phi^*$.

\subsection{Modular class of comorphisms}

Let $A\to M$ and $B\to N$ be two Lie algebroids. A {comorphism} between $A$ and $B$ covering $\phi:M\to N$ is a vector bundle map $\Phi:\phi^! B\to A$, from the pullback vector bundle $\phi^!B$ to $A$, such that the following two conditions hold:
 \[
 \brr{\bar\Phi X,\bar \Phi Y}=\bar\Phi\brr{X,Y},
 \]
 and
 \[ \phi_*\smalcirc a_A(\bar\Phi X)=a_B (X),\]
for $X,Y\in \Gamma(B)$, where $\bar\Phi:\Gamma(B)\to \Gamma(A)$ is the natural  map on sections induced by $\Phi$.

Equivalently, we may say that $(\Phi,\phi)$ is a Lie algebroid comorphism if and only if $\Phi^*:A^*\to B^*$ is a Poisson map for the natural linear Poisson  structures on the duals of the Lie algebroids.

When one has a Lie algebroid comorphism
 $\Phi:\phi^!B\to A$, the pullback vector bundle $\phi^!B\to M$ carries a natural  Lie algebroid structure  characterized by:
 \[
 \brr{X^!,Y^!}=\brr{X,Y}^!
 \]
 and
 \[ a(X^!)=a_A(\bar\Phi X^!),\]
 for $X,Y\in \Gamma(B)$, $X^!=X\smalcirc \phi  \in\Gamma(\phi^!B)$ and $Y^!=Y\smalcirc \phi  \in\Gamma(\phi^!B)$.

 For this structure, the natural maps
\begin{equation*}
\xymatrix{
\phi^{!}B\ar[r]^{\Phi}\ar[dr]_{j}& A
\\
&B}
\end{equation*}
are Lie algebroid morphisms.

\begin{defn}(\cite{CLF, RC})
 The \textbf{modular class} of the Lie algebroid comorphism $\Phi:\phi^!B\to A$  is the cohomology class:
\[ \mod(\Phi):=\Phi^*\mod(A)-j^*\mod(B)\in H^1(\phi^{!}B). \]
\end{defn}

Clearly,
 $\mod(\Phi)$ gives the obstruction to the existence of modular cocycles $\al\in\Omega^1(A)$ and $\be\in\Omega^1(B)$, such that
$$
\Phi^*\al=\be\smalcirc \phi.
$$

Looking at  characteristic classes, one notices the natural representation of $\phi^!B$ on the
line bundle $L^\phi:=L_A\otimes \phi^!L_B^*$, and we have:
\[ \mod(\Phi)=\ch(L^\phi).\]

\begin{ex}
A Poisson map $\phi:M\to N$  defines a comorphism between  cotangent bundles: $\ds \Phi:\phi^!T^*N\to T^*M$ such that
$\Phi (\al^!)=\phi^*\al$, where $\al^!=\al\smalcirc\phi\in\Gamma\left(\phi^!(T^*N)\right)$, for all $\al\in\Omega^1(N)$.
The modular class of the Poisson map $\phi$ was defined in \cite{CLF} and we see that it is one half the modular class of the comorphism $\Phi$ induced by $\phi$.
\end{ex}

\section{Dirac manifolds and their modular classes}     %

Dirac structures on manifolds were introduced in \cite{Courant} to unify presymplectic and Poisson structures. In this section we recall some important properties about Dirac manifolds (see, e.g., \cite{Courant, Bursztyn}) and we see how their modular classes behave under Dirac maps.

\subsection{Dirac structures}

Let $M$ be a manifold and let us consider the double (or generalized tangent bundle) $\Tt M=TM\oplus T^*M$ equipped with  natural projections
$$
\rho:\Tt M \to TM\quad \mbox{ and }\quad  \rho_*:\Tt M\to T^*M.
$$
The symmetric fiberwise bilinear  form
\begin{equation}\label{eq:symmetric:bilinear:form}
\eval{X+\al, Y+\be}_+=\frac{1}{2}(\eval{\be,X}+\eval{\al,Y}),
\end{equation}
the   bracket
\begin{align}
\brr{X+\al,Y+\be}=\brr{X,Y}
+ \Lie_X\be-\Lie_Y\al + \frac{1}{2}\d(\eval{\al,Y}-\eval{\be,X}),\label{CourantBracketDef}
\end{align}
$ X, Y\in \X(M), \al, \be\in \Omega^1(M)$, and the anchor $\ds \rho:\Tt M\to TM$
turn $\Tt M$ into a Courant algebroid. Now recall:

\begin{defn}
\label{def:Dirac:structure}
A subbundle $L$ of $\Tt M$ is called {isotropic} if it is isotropic under the symmetric form (\ref{eq:symmetric:bilinear:form}). It is called {integrable} if its space of sections $\Gamma(L)$ is closed under the Courant bracket (\ref{CourantBracketDef}). An integrable maximally isotropic subbundle of $\Tt M=TM\oplus T^*M$ is a {\bf Dirac structure} and, in this case, the pair $(M,L)$ is called a \textbf{Dirac manifold}.
\end{defn}

The maximal isotropy of a Dirac structure  $L$ is encoded in its characteristic equations:
\begin{equation}\label{eq:characteristic}
L\cap TM=\rho_*(L)^0, \quad \quad L\cap T^*M=\rho(L)^0.
\end{equation}
The integrability of $(M,L)$ provides a Lie algebroid structure on the bundle $L\to M$ with bracket
$\brr{\, , \,}|_L$ and anchor $\rho|_L$.

A function $f\in C^\infty(M)$ for which there exists a vector field  $X_f\in\X(M)$ such that
$X_f+d f\in \Gamma(L)$ is called  $L$-admissible.
The set of all $L$-admissible functions is denoted by $C_L^\infty(M)$ and has a Poisson bracket given by
$$
\{f,g\}=X_f(g)=-X_g(f).
$$

The fiber bundle $D_L=L\cap TM$ defines a (singular) integrable distribution over $M$, that we  call the characteristic
 distribution of $L$. We will denote by $\F_L$ (or simply $\F$, if there is no ambiguity) the foliation of $M$ corresponding to $D_L$.

 \subsection{Reducible Dirac manifolds}
 We will be interested in Dirac manifolds for which $D_L$ is a regular distribution (i.e. has constant rank). In particular,  the ones giving rise to a simple foliation $\F$ deserve a special name:

\begin{defn}
A Dirac manifold $(M,L)$ is said to be \textbf{reducible} if the characteristic distribution $D_L=L\cap TM$ is regular and induces a simple foliation $\F$. This means that the quotient  $M/\F$ is a smooth manifold and the projection $p:M\to M/\F$ is a submersion.
\end{defn}

For a reducible Dirac manifold $(M,L)$, with characteristic distribution  $D_L=L\cap TM$ and simple foliation $\F$, the characteristic equation  $D_L^0=\rho_*(L)$ and the exact sequence
$$0\rightarrow D_L \rightarrow TM \rightarrow p^!T(M/\F) \rightarrow 0$$
guarantees that  $\rho_*(L)$ is  isomorphic to the pullback vector bundle $p^!T^*(M/\F)$.

Moreover, the the injective map $p^*: C^\infty(M/\F)\to C_L(M)$ identifies the set of $L$-admissible functions with $C^\infty(M/ \F)$.
This defines a Poisson bracket on $C^\infty(M/ \F)$ so that $M/\F$ is a Poisson manifold with Lie algebroid $T^*(M/\F)$.
In fact, there is a one-one correspondence between reducible Dirac manifolds $(M,L)$ and Poisson structures on the quotient manifold $M/\F$ (see \cite{Courant, LWX}).

Given any Dirac manifold  $(M,L)$ with a regular characteristic distribution one can always choose a bivector $\Pi$ on $M$
such that (see \cite{Liu}):
$$
L=\set{X+\Pi^\sharp\al  + \al:\, X\in D_L, \al \in D_L^0}=D_L\oplus \mbox{graph}(\Pi^\sharp|_{D_L^0}).
$$
The pair $(D_L, \Pi)$ is called a {\bf characteristic pair} of $L$. In general, this pair is not unique. When $L$ is reducible, given any characteristic pair $(D_L, \Pi)$, one has:
$$
p_* \Pi=\pi.
$$
where $p:M\to M/\F$ is the projection and $\pi$ is the bivector determined by the Poisson bracket on $C^\infty(M/\F)$.

\begin{prop}\label{prop:pullback:reduced}
Let $(M,L)$ be a reducible Dirac manifold with characteristic foliation $\F$ and projection $p:M\to M/\F$. Then
 $L$ is isomorphic to the pullback Lie algebroid $p^{!!}T^*(M/\F)$.
\end{prop}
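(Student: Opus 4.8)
The plan is to construct an explicit isomorphism of vector bundles $\Psi\colon L\to p^{!!}T^*(M/\F)$ over $\mathrm{id}_M$ and then check that it intertwines anchors and brackets. Write $N=M/\F$, let $\pi$ be the Poisson bivector on $N$, and fix a characteristic pair $(D_L,\Pi)$ of $L$, so that $p_*\Pi=\pi$ and $L=D_L\oplus\graf(\Pi^\sharp|_{D_L^0})$; recall also that $D_L$ is tangent to the fibres of $p$, whence $\bar\al\mapsto p^*\bar\al$ identifies $p^!T^*N$ with $D_L^0$. For $v=X+\al\in L_x$ the characteristic equation $\rho_*(L)=D_L^0$ of a reducible Dirac structure gives $\al=\rho_*(v)\in D_L^0$, so $\al=p^*\bar\al$ for a unique $\bar\al\in T^*_{p(x)}N$; put $\Psi(v):=(X,\bar\al)\in TM\oplus p^!T^*N$. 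Writing $v=Y+(\Pi^\sharp\al+\al)$ with $Y\in D_L=\ker p_*$, one gets $p_*X=p_*(\Pi^\sharp p^*\bar\al)=(p_*\Pi)^\sharp\bar\al=\pi^\sharp\bar\al$, so $\Psi(v)\in p^{!!}T^*N=\{(X,v)\colon\pi^\sharp v=p_*X\}$. Conversely, if $(X,\bar\al)\in p^{!!}T^*N$ then $p_*(X-\Pi^\sharp p^*\bar\al)=p_*X-\pi^\sharp\bar\al=0$, so $X-\Pi^\sharp p^*\bar\al\in D_L$ and $X+p^*\bar\al=(X-\Pi^\sharp p^*\bar\al)+(\Pi^\sharp p^*\bar\al+p^*\bar\al)\in L$; this is the inverse $(X,\bar\al)\mapsto X+p^*\bar\al$ of $\Psi$. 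Hence $\Psi$ is a bundle isomorphism, and it intertwines anchors because both anchors are the projection to $TM$: $\rho(X+\al)=X=a^!(X,\bar\al)$.

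It remains to show $\Psi$ respects brackets. By locality and the Leibniz rule it suffices, for each chart $V\subseteq N$, to verify the identity on sections generating $\Gamma(L|_{p^{-1}(V)})$: we take (i) the vertical vector fields $Z\in\Gamma(D_L)$, viewed as sections of $L$, and (ii) the sections $\widetilde X+p^*\bar\al$ with $\bar\al\in\Omega^1(V)$ and $\widetilde X$ a projectable lift of $\pi^\sharp\bar\al$ over $p^{-1}(V)$ (which exists since $p$ is a submersion); at each point $x$ the former span $(D_L)_x$ and the latter span $L_x$ modulo $(D_L)_x$, because $p^*$ is onto the fibres of $D_L^0$. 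For two sections of type (ii), using that $\widetilde X,\widetilde Y$ are $p$-related to $\pi^\sharp\bar\al,\pi^\sharp\bar\be$ — so $\Lie_{\widetilde X}p^*\bar\be=p^*\Lie_{\pi^\sharp\bar\al}\bar\be$, $\langle p^*\bar\al,\widetilde Y\rangle=p^*\langle\bar\al,\pi^\sharp\bar\be\rangle$ and $\d p^*=p^*\d$ — the Courant bracket $[\widetilde X+p^*\bar\al,\widetilde Y+p^*\bar\be]$ has $TM$-part $[\widetilde X,\widetilde Y]$, which is a projectable lift of $[\pi^\sharp\bar\al,\pi^\sharp\bar\be]=\pi^\sharp[\bar\al,\bar\be]_\pi$, and $T^*M$-part $p^*$ applied to the $1$-form part of the Courant bracket on $\Tt N$ of $\pi^\sharp\bar\al+\bar\al$ and $\pi^\sharp\bar\be+\bar\be$. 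The last two elements lie in the Dirac structure $\graf(\pi^\sharp)$ of the Poisson manifold $N$, whose Lie algebroid is canonically $(T^*N,[\cdot,\cdot]_\pi)$ as in Example \ref{ex:mod:Poisson}; hence that $1$-form part equals $[\bar\al,\bar\be]_\pi$, and $[\widetilde X+p^*\bar\al,\widetilde Y+p^*\bar\be]=[\widetilde X,\widetilde Y]+p^*[\bar\al,\bar\be]_\pi$. Applying $\Psi$ yields $([\widetilde X,\widetilde Y],1\otimes[\bar\al,\bar\be]_\pi)$, and this is exactly the pullback-algebroid bracket of $\Psi(\widetilde X+p^*\bar\al)=(\widetilde X,1\otimes\bar\al)$ and $\Psi(\widetilde Y+p^*\bar\be)=(\widetilde Y,1\otimes\bar\be)$, since in the bracket formula the Leibniz terms $X(g)\be-Y(f)\al$ vanish when the coefficients equal $1$.

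The two remaining families of brackets are straightforward: for $Z,W\in\Gamma(D_L)$ the Courant bracket is the Lie bracket $[Z,W]\in\Gamma(D_L)$ (involutivity of the characteristic distribution), and $\Psi[Z,W]=([Z,W],0)=[(Z,0),(W,0)]$; for a section of type (ii) against $Z\in\Gamma(D_L)$ one has $\Lie_Z(p^*\bar\al)=0$ and $\langle p^*\bar\al,Z\rangle=0$, so the Courant bracket reduces to $[\widetilde X,Z]$, which is $p$-related to $0$ and hence lies in $\Gamma(D_L)$, giving $\Psi[\widetilde X+p^*\bar\al,Z]=([\widetilde X,Z],0)=[(\widetilde X,1\otimes\bar\al),(Z,0)]$. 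This shows $\Psi$ is a Lie algebroid isomorphism over each $p^{-1}(V)$, hence over $M$. The real work is the bracket computation for type-(ii) sections: one must keep the $TM$- and $T^*M$-components of the Courant bracket separate and identify them with the two components of the pullback-algebroid bracket, which uses essentially that $\pi$ is Poisson — equivalently, that $L$ is the pullback Dirac structure $\{X+p^*\bar\al\colon p_*X=\pi^\sharp\bar\al\}$ of $\graf(\pi^\sharp)$ along $p$. Alternatively one may carry out the whole computation in foliated coordinates $(x^1,\dots,x^n,y^1,\dots,y^k)$ for $\F$, in which $L$ is spanned at each point by $\partial_{y^a}$ and $\Pi^\sharp(\d x^i)+\d x^i$ with $\Pi=\frac{1}{2}\pi^{ij}(x)\,\partial_{x^i}\wedge\partial_{x^j}$, and the identities needed reduce to the Jacobi identity $[\pi,\pi]=0$ written in coordinates.
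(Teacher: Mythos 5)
Your proof is correct, and it constructs exactly the same isomorphism as the paper, namely $X+p^*\bar\al \leftrightarrow (X,\bar\al)$ built from a characteristic pair $(D_L,\Pi)$ with $p_*\Pi=\pi$; the difference lies entirely in how the bracket identity is verified. The paper checks it on arbitrary sections $X+f\otimes\al$, $Y+g\otimes\be$ of the pullback algebroid, which forces it to handle the constraint $p_*X=f\,\pi^\sharp\al$ with varying coefficient functions; the resulting identity $fg\,p^*\brr{\al,\be}_\pi=g\Lie_X(p^*\be)-f\Lie_Y(p^*\al)+\d\eval{fp^*\al,Y}$ is isolated as a separate lemma proved by Cartan's magic formula. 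You instead reduce, via anchor-compatibility and the Leibniz rule, to a $C^\infty(M)$-generating set consisting of vertical sections $Z\in\Gamma(D_L)$ and sections $\widetilde X+p^*\bar\al$ with $\widetilde X$ a projectable lift of $\pi^\sharp\bar\al$; for these the coefficient functions are constant, the Leibniz terms drop out, and the whole computation collapses to naturality of $\Lie$ and $\d$ under $p$-related vector fields together with the identity $\pi^\sharp\brr{\bar\al,\bar\be}_\pi=\brr{\pi^\sharp\bar\al,\pi^\sharp\bar\be}$. This buys a more transparent proof (the role of $[\pi,\pi]=0$ and of $p$-relatedness is isolated, and you also verify the bundle-isomorphism claim that the paper only asserts), at the cost of the small extra argument that the two families of sections really do generate; the paper's lemma is slightly more general in that it handles arbitrary coefficients directly and is reusable elsewhere. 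Both routes are complete and correct.
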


For the proof of we need the following Lemma which follows from a straightforward computation using Cartan's magic formula.

\begin{lem}\label{lem:pullback}
Let $p:M\to N$ be a smooth map and $\pi\in\X^2(N)$ a bivector on $N$. Then
$$
fg\, p^*\brr{\al,\be}_\pi = g\Lie_X(p^*\be) - f \Lie_Y(p^*\al) + \d\eval{fp^*\al,Y},
$$
for
$X, Y\in \X(M)$, $f,g\in C^\infty(M)$ and $\al,\be\in\Omega^1(N)$,  such that $\ds p_*X_x=f(x) \,\pi^\sharp(\al)_{p(x)}$ and $\ds p_*Y_x= g(x)\,\pi^\sharp(\be)_{p(x)}$, $x\in M$.
\end{lem}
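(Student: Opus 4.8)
The plan is to massage both sides into combinations of Lie derivatives and interior products of pulled-back forms and then match them term by term, using only Cartan's magic formula $\Lie_Z=\d\iota_Z+\iota_Z\d$ together with the pointwise hypotheses on $X$ and $Y$. First I would rewrite the Koszul bracket of Example~\ref{ex:mod:Poisson} in the more economical form
\[
\brr{\al,\be}_\pi=\Lie_{\pi^\sharp\al}\be-\iota_{\pi^\sharp\be}\,\d\al,
\]
which follows from the defining formula together with $\iota_{\pi^\sharp\be}\al=\eval{\al,\pi^\sharp\be}=-\pi(\al,\be)$ and one application of Cartan's formula to $\Lie_{\pi^\sharp\be}\al$.

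The key technical device is a scaling lemma: if a vector field $Z$ on $M$ satisfies $p_*Z_x=h(x)\,V_{p(x)}$ for some $V\in\X(N)$ and $h\in C^\infty(M)$, then $\iota_Z\,p^*\omega=h\,p^*(\iota_V\omega)$ for every form $\omega$ on $N$. This is purely pointwise: evaluating on tangent vectors and pushing forward through $p$ extracts the scalar $h(x)$ by multilinearity, so $h$ is never inverted and the identity holds everywhere, including where $h$ vanishes. Applying $p^*$ to the rewritten bracket, using $p^*\smalcirc\d=\d\smalcirc p^*$ and that $\iota_{\pi^\sharp\al}\be=\pi(\al,\be)$ is a function, and then multiplying by $fg$, the hypotheses $p_*X=f\,\pi^\sharp\al\smalcirc p$ and $p_*Y=g\,\pi^\sharp\be\smalcirc p$ give, via the scaling lemma applied twice,
\[
fg\,p^*\brr{\al,\be}_\pi=fg\,\d\,p^*\pi(\al,\be)+g\,\iota_X\,\d p^*\be-f\,\iota_Y\,\d p^*\al.
\]

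Next I would expand the right-hand side of the stated identity. Cartan's formula gives $\Lie_X(p^*\be)=\d\eval{p^*\be,X}+\iota_X\,\d p^*\be$ and likewise for $\Lie_Y(p^*\al)$, while $\eval{fp^*\al,Y}=f\eval{p^*\al,Y}$; the Leibniz rule then collapses $-f\,\d\eval{p^*\al,Y}+\d(f\eval{p^*\al,Y})$ into $\eval{p^*\al,Y}\,\d f$, so the right-hand side becomes
\[
g\,\d\eval{p^*\be,X}+g\,\iota_X\,\d p^*\be-f\,\iota_Y\,\d p^*\al+\eval{p^*\al,Y}\,\d f.
\]
Finally I would evaluate the two scalar functions by pushing $X$ and $Y$ forward through $p$: the hypotheses yield $\eval{p^*\be,X}=f\,p^*\pi(\al,\be)$ and $\eval{p^*\al,Y}=g\,p^*\pi(\be,\al)=-g\,p^*\pi(\al,\be)$. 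Substituting and expanding $g\,\d\!\left(f\,p^*\pi(\al,\be)\right)$ by Leibniz, the term $g\,p^*\pi(\al,\be)\,\d f$ cancels against $\eval{p^*\al,Y}\,\d f$, leaving precisely the three-term expression for $fg\,p^*\brr{\al,\be}_\pi$ found above.

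The computation is entirely routine; the only step deserving care is the scaling lemma, where it is essential that $f$ and $g$ are never inverted, so that the conclusion is an equality of $1$-forms valid on all of $M$ and not merely on the locus where $f,g\neq0$. With this understood, the matching of the two three-term expressions completes the proof.
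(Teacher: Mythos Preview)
Your proof is correct and follows precisely the approach indicated in the paper, which merely states that the lemma ``follows from a straightforward computation using Cartan's magic formula.'' Your write-up supplies exactly that computation, and the care you take with the pointwise scaling identity (so that $f$ and $g$ are never inverted) is appropriate.
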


\begin{proof}[Proof of Proposition \ref{prop:pullback:reduced}]
Let $(D,\Pi)$ be a characteristic pair of $L$ and let $\pi$ the Poisson bivector on $M/\F$ induced by $L$. Then
 $$L=\set{X  + \Pi(\al) + \al:\, X\in D, \al \in D^0} \mbox{ and } \, p_*\Pi=\pi.$$
By definition, the pullback Lie algebroid of $T^*(M/\F)$ is the bundle
$$p^{!!}T^*(M/\F)=\set{(X,\al) \in T_xM\oplus T^*_{p(x)}(M/\F): p_* X=\pi^\sharp\al, x\in M},$$
and the bundle map $\Psi:\ds p^{!!}T^*(M/\F)\to L$,
$$
X+\al \longmapsto (X,p^*\al)
$$
gives an isomorphism between $\ds p^{!!}T^*(M/\F)$ and $L$ as vector bundles.

The anchors of $\ds p^{!!}T^*(M/\F)$ and $L$ are clearly related by $\Psi$. So consider $X,Y\in \X(M)$,  $\al, \be\in \Omega(M/\F)$ and $f,g\in C^\infty(M)$,  such that,
$\ds p_*X=f\pi^\sharp{\al}$ and
$\ds p_*Y=g\pi^\sharp{\be}$, we have
\begin{align*}
\Psi\brr{X+ f\otimes \al,Y+g\otimes \be}&=\Psi\left(\brr{X,Y} + X(g)\otimes \be - Y(f)\otimes\, \al + f g \brr{\al,\be}_\pi\right)\\
&=(\brr{X,Y},   X(g)\, p^*\be - Y(f)\, p^*\al + f g\, p^*\brr{\al,\be}_\pi).
\end{align*}
Using Lemma \ref{lem:pullback} we conclude that $$ \Psi\brr{X+ f\otimes \al,Y+g\otimes \be}=\brr{X+p^*\al, Y+p^*\be}_L$$
and, consequently, $\Psi$ is a Lie algebroid isomorphism.
\end{proof}

\subsection{Modular classes of Dirac manifolds} One defines the {\bf modular class of a Dirac manifold} $(M,L)$ to be $\mod L$, the modular class of the underlying Lie algebroid. If its characteristic distribution $D_L$ has constant rank, it is integrable, hence defines a Lie algebroid, and we have the modular class $\mod D_L$.

Recalling that for a distribution $T\F$ the modular class is the obstruction to the existence of a transverse volume form to $\F$, we immediately obtain:

\begin{prop}
Let $(M,L)$ be a reducible Dirac manifold with characteristic distribution $D_L$. Then $\mod D_L=0$.
\end{prop}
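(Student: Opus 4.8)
The plan is to exploit the fact that, for a reducible Dirac manifold $(M,L)$, the characteristic distribution $D_L$ is by definition a regular, integrable distribution whose foliation $\F$ is \emph{simple}, i.e. the projection $p:M\to M/\F$ is a submersion. The modular class $\mod D_L$ of the Lie algebroid $D_L=T\F$ is, as recalled just before the statement, precisely the obstruction to the existence of a volume form on $M$ transverse to $\F$; so it suffices to produce such a transverse volume form. For a simple foliation this is immediate: pick any volume form $\nu$ on the (smooth, orientable-after-passing-to-a-cover) quotient $M/\F$ and pull it back to $M$ along $p$. More precisely, recalling the construction of the modular cocycle, one works with the canonical representation $L_{D_L}=\wedge^{\top}D_L\otimes\wedge^{\top}T^*M$; the exact sequence $0\to D_L\to TM\to p^!T(M/\F)\to 0$ from the discussion of reducible Dirac manifolds gives a canonical isomorphism $\wedge^{\top}T^*M\cong \wedge^{\top}D_L^*\otimes p^!\wedge^{\top}T^*(M/\F)$, hence $L_{D_L}\cong p^!\wedge^{\top}T^*(M/\F)$ as line bundles, and under this identification the canonical flat $D_L$-connection on $L_{D_L}$ becomes the pullback connection, which kills the pullback section $p^*\nu$ because vector fields tangent to $\F$ are $p$-related to the zero vector field.

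First I would make the identification $L_{D_L}\cong p^!\wedge^{\top}T^*(M/\F)$ explicit via the above short exact sequence, being careful about the orientability hypothesis: if $M/\F$ (equivalently $\wedge^{\top}T^*(M/\F)$) is not orientable one passes, as in the general discussion of characteristic classes of line bundles, to $L_{D_L}\otimes L_{D_L}$ and uses $p^*(\nu\otimes\nu)$ for a local choice, which is globally defined as a section of the square even when $\nu$ is only defined up to sign; this is the standard "one half" maneuver already in play in the paper. Second, I would choose a nowhere-vanishing section $\mu\otimes p^*\nu\in\Gamma(\wedge^{\top}D_L\otimes\wedge^{\top}T^*M)$ (after the identification) and compute the modular cocycle $\al_{\mu\otimes p^*\nu}$ relative to it. Third, for $X\in\Gamma(D_L)=\mathfrak X(\F)$, since $p_*X=0$ we get $\Lie_{\rho(X)}p^*\nu=\Lie_X p^*\nu=0$; combined with $\Lie_X\mu=\langle\al_\mu,X\rangle\mu$ for the purely leafwise piece, and noting that the leafwise top-form part contributes nothing to the \emph{transverse} modular cocycle because $\wedge^\top D_L$ is in fact preserved (a standard fact: the modular class of $T\F$ comes entirely from the action on the transverse part), one concludes $\al_{\mu\otimes p^*\nu}=0$, hence $\mod D_L=0$.

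Alternatively, and perhaps more cleanly, I would invoke functoriality: the distribution $D_L=T\F$ sits in the pullback Lie algebroid picture too. Since $p$ is a submersion, $p^{!!}T(M/\F)=\{(X,v)\in TM\times T(M/\F): p_*X=v\}\cong TM$, and the kernel of the projection $TM\cong p^{!!}T(M/\F)\to T(M/\F)$ over each point is exactly $D_L$; restricting, the canonical morphism exhibits $D_L$ as $p^{!!}(\text{zero algebroid over }M/\F)$, or more simply one applies Proposition~\ref{prop:KLW:pullback}-type reasoning to the morphism $D_L\hookrightarrow TM\to p^!T(M/\F)$. In any case, the computation reduces to the one-line observation that leafwise vector fields are $p$-related to zero.

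The main obstacle is bookkeeping rather than conceptual: one must check that, under the isomorphism $L_{D_L}\cong p^!\wedge^{\top}T^*(M/\F)$, the canonical flat $D_L$-connection genuinely goes over to the pullback of the trivial connection (so that the contribution of the leafwise top-form factor $\wedge^\top D_L$ to the cocycle indeed drops out), and to handle the non-orientable case correctly via the doubling trick. Both points are routine once the exact sequence $0\to D_L\to TM\to p^!T(M/\F)\to 0$ is used to split the canonical line bundle, and the orientability subtlety is dispatched exactly as in the paragraph on characteristic classes of line bundles in Section~\ref{subsec:mod:Lie:algbrd}.
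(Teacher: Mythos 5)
Your argument is correct and is essentially the paper's proof: the paper simply notes that any volume form on $M/\F$ pulls back to a nowhere-vanishing transverse volume form to $\F$, whose existence is exactly what $\mod D_L$ obstructs. Your additional bookkeeping (the splitting $L_{D_L}\cong p^!\wedge^{\top}T^*(M/\F)$ via the exact sequence, and the doubling trick for the non-orientable quotient) only makes explicit what the paper leaves implicit.
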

\begin{proof}
Any volume form on $M/\mathcal{F}$ pulls back to  a non-vanishing transverse volume form to $\F$. This means that $\mod D_L=0$.
\end{proof}

On the other had, Propositions \ref{prop:KLW:pullback} and \ref{prop:pullback:reduced} yield immediately:

\begin{prop}\label{prop:modular:class:reducible}
Let $(M,L)$ be a reducible Dirac manifold with characteristic foliation $\F$ and projection $p:M\to M/\F$. The modular class of $L$ is the pullback of the modular class of $T^*(M/\F)$.
\end{prop}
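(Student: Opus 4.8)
The plan is to read the statement off from the two preceding propositions. Since $(M,L)$ is reducible, the projection $p:M\to M/\F$ is by definition a submersion, so Proposition~\ref{prop:KLW:pullback} applies with $\phi=p$ and $A=T^*(M/\F)$: the pullback Lie algebroid $p^{!!}T^*(M/\F)$ is defined and the canonical Lie algebroid morphism $(\Phi,p):p^{!!}T^*(M/\F)\to T^*(M/\F)$ has vanishing modular class. As recorded in the remark after that proposition, this is exactly the identity
\[
\mod\bigl(p^{!!}T^*(M/\F)\bigr)=\Phi^*\mod\bigl(T^*(M/\F)\bigr)\in H^1\bigl(p^{!!}T^*(M/\F)\bigr).
\]

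Next I would invoke Proposition~\ref{prop:pullback:reduced}, which provides a Lie algebroid isomorphism $\Psi:p^{!!}T^*(M/\F)\to L$. Lie algebroid isomorphisms have vanishing modular class, so $(\Psi^{-1})^*$ intertwines $\mod(p^{!!}T^*(M/\F))$ with $\mod L$. Composing the two facts gives $\mod L=(\Phi\circ\Psi^{-1})^*\mod(T^*(M/\F))$, where $\Phi\circ\Psi^{-1}:L\to T^*(M/\F)$ is precisely the Lie algebroid morphism over $p$ that exhibits $L$ as the pullback of $T^*(M/\F)$; this is the sense in which ``the modular class of $L$ is the pullback of the modular class of $T^*(M/\F)$.''

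There is no real obstacle here, since all the content lives in Propositions~\ref{prop:KLW:pullback} and~\ref{prop:pullback:reduced}; the only point worth pinning down is which cohomology map ``pullback by $p$'' denotes, namely the one induced by the composite $\Phi\circ\Psi^{-1}$. If one prefers a self-contained argument at the level of cocycles, one can instead fix a volume form $\nu$ on $M/\F$ and a nowhere vanishing section of $\wedge^{\top}D_L^0$; together with the canonical section of $\wedge^{\top}D_L\otimes\wedge^{\top}D_L^*$ these yield compatible trivializations of $L_{T^*(M/\F)}$ and of $L_L$, and a direct check — using $\Psi$ and the fact that the anchors are $p_*$-related — shows that the associated modular cocycles are $p$-related, giving the conclusion on the nose.
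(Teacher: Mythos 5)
Your proof is correct and follows exactly the paper's own argument: combine the isomorphism $L\simeq p^{!!}T^*(M/\F)$ of Proposition~\ref{prop:pullback:reduced} with the vanishing of the modular class of the canonical morphism $p^{!!}T^*(M/\F)\to T^*(M/\F)$ from Proposition~\ref{prop:KLW:pullback}. (Incidentally, the paper's proof cites these two propositions in swapped order; your attribution is the correct one.)
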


\begin{proof}
It is enough to observe that the (pullback) Lie algebroid morphism $p:L\to T^*(M/\F)$ vanishes. This is obvious since this morphism is the composition of the isomorphism $L\simeq p^{!!}T^*(M/\F)$ of Proposition \ref{prop:KLW:pullback}, with the Lie algebroid morphism $p^{!!}T^*(M/\F)\to T^*(M/\F)$ of Proposition \ref{prop:pullback:reduced}, whose modular class vanishes.
\end{proof}

\begin{ex}
The graph $L_\pi$ of a Poisson bivector $\pi$ on a manifold $M$ is a Dirac structure which is  isomorphic (as a Lie algebroid) to $T^*M$.  In this case, $$\mod (L_\pi)\simeq\mod T^*M= 2 \mod(M,\pi).$$
This is precisely the case where $D|_L=0$.
\end{ex}

\begin{ex}
The graph $L_\omega$ of a presymplectic 2-form $\omega$ on a manifold $M$ is a Dirac structure isomorphic to $TM$. In the reducible case, $M/\F$ is a symplectic manifold and $\mod (L_\omega)=0 $.
\end{ex}

Proposition \ref{prop:modular:class:reducible} can be extended to the more general setting of a Dirac structure $L$ of a {\bf Poisson manifold bialgebroid $(T^*M,TM,\pi)$}. We recall that a Dirac structure $L$ on the Poisson manifold Lie bialgebroid $(TM,T^*M,\pi)$ is an integrable maximally isotropic subbundle of $\Tt M=TM\oplus T^*M$ equipped with the bilinear symmetric bracket (\ref{eq:symmetric:bilinear:form}), the anchor $\ds a=\rho + \pi^\sharp\smalcirc\rho_*$ and the Courant  bracket
\begin{align*}\label{def:Courant:Bracket:Poisson:manifold}
\brr{X+\al,Y+\be}=&\brr{X,Y}+\Lie_\al Y-\Lie_\be X - \frac{1}{2}\d_\pi(\eval{\al,Y}-\eval{\be,X})\nonumber \\
&+ \set{\al,\be}_\pi
+ \Lie_X\be-\Lie_Y\al + \frac{1}{2}\d(\eval{\al,Y}-\eval{\be,X}),
\end{align*}
for $X+\al, Y+\be\in \Gamma(\Tt M)$.

Ordinary Dirac manifolds are simply Dirac structures on the Poisson manifold Lie bialgebroid $(TM, T^*M, 0)$.

\begin{prop} \label{prop:isomorphism:Dirac:pullback}
Let  $(M,\pi)$ be a Poisson manifold and $L$ a Dirac structure on the Lie bialgebroid $(TM,T^*M,\pi)$. Consider the vector bundle map
 $\Phi:L\to \Tt M$  given by
\begin{equation*}\label{eq:map:isomorphism:Dirac:pullback}
X+\al\longmapsto \Phi(X+\al)=X+\pi^\sharp \al + \al.
\end{equation*}
The image of $L$ by $\Phi$ defines an ordinary Dirac structure on $M$ with the same characteristic foliation as $L$. Moreover $\Phi(L)$ and $L$ are isomorphic as Lie algebroids.

In particular, if $L$ is reducible then it is isomorphic to $p^{!!}T^*(M/\F)$, where $\F$ is the characteristic foliation of $L$ and $p:M\to M/\F$ is the natural projection.
\end{prop}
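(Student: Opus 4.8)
The plan is to realise $\Phi$ as the restriction to $L$ of a vector bundle automorphism $\widetilde\Phi$ of $\Tt M$, covering the identity of $M$, that intertwines the Courant algebroid structure of the Poisson manifold Lie bialgebroid $(TM,T^*M,\pi)$ with the standard Courant algebroid structure on $\Tt M$; the statement then follows by transport of structure. Concretely, I extend $\Phi$ to $\widetilde\Phi:\Tt M\to\Tt M$, $X+\al\mapsto X+\pi^\sharp\al+\al$. Its inverse is $X+\al\mapsto X-\pi^\sharp\al+\al$, so $\widetilde\Phi$ is a bundle automorphism. A one-line computation using the skew-symmetry of $\pi$ shows that it preserves the symmetric pairing (\ref{eq:symmetric:bilinear:form}), since
\[
\eval{\widetilde\Phi(X+\al),\widetilde\Phi(Y+\be)}_+=\eval{X+\al,Y+\be}_+ + \frac12\bigl(\pi(\al,\be)+\pi(\be,\al)\bigr)
\]
and the last term vanishes; hence $\widetilde\Phi$ carries (maximally) isotropic subbundles to (maximally) isotropic subbundles. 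Likewise $\rho\smalcirc\widetilde\Phi=\rho+\pi^\sharp\smalcirc\rho_*=a$, so $\widetilde\Phi$ intertwines the anchor $a$ of the bialgebroid with the anchor $\rho$ of the standard Courant algebroid.

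The heart of the proof is the bracket identity
\[
\widetilde\Phi\,\brr{e_1,e_2}_\pi=\brr{\widetilde\Phi(e_1),\widetilde\Phi(e_2)},\qquad e_1,e_2\in\Gamma(\Tt M),
\]
where $\brr{\cdot,\cdot}_\pi$ denotes the Courant bracket of $(TM,T^*M,\pi)$ recalled above and $\brr{\cdot,\cdot}$ is the standard Courant bracket (\ref{CourantBracketDef}). Both brackets are local and bi-differential, and $\widetilde\Phi$ covers the identity and is $C^\infty(M)$-linear, so --- using the Leibniz-type identities satisfied by both brackets together with the fact that $\widetilde\Phi$ respects the pairing and the anchor --- it suffices to verify the identity on sections of the two summands $TM$ and $T^*M$. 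For two vector fields both sides collapse to $\brr{X,Y}$. For two $1$-forms, expanding the standard Courant bracket of $\widetilde\Phi(\al)=\pi^\sharp\al+\al$ and $\widetilde\Phi(\be)=\pi^\sharp\be+\be$ and comparing with $\widetilde\Phi\,\set{\al,\be}_\pi=\pi^\sharp\set{\al,\be}_\pi+\set{\al,\be}_\pi$, the claim reduces to $\pi^\sharp\set{\al,\be}_\pi=\brr{\pi^\sharp\al,\pi^\sharp\be}$, that is, to the fact that $\pi^\sharp:T^*M\to TM$ is a morphism of Lie algebroids (equivalently, $[\pi,\pi]=0$). The remaining mixed case $e_1=X\in\X(M)$, $e_2=\be\in\Omega^1(M)$ is a direct Cartan-calculus computation with the operators $\Lie_\be$, $\d_\pi$ and the Koszul bracket, analogous to Lemma \ref{lem:pullback}; it ultimately comes down to the identity $\d_\pi f=-\pi^\sharp(\d f)$ on functions. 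I expect this to be the only genuine obstacle: it is entirely mechanical but somewhat lengthy. Alternatively one may simply invoke the known fact that gauge transformation by a Poisson bivector identifies the Drinfeld double of $(TM,T^*M,\pi)$ with the standard Courant algebroid on $\Tt M$.

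Granting the bracket identity, $\widetilde\Phi$ is an isomorphism of Courant algebroids, hence it maps the maximally isotropic integrable subbundle $L$ to a maximally isotropic integrable subbundle $\Phi(L)=\widetilde\Phi(L)$ of $(\Tt M,\rho,\brr{\cdot,\cdot})$; that is, $\Phi(L)$ is an ordinary Dirac structure on $M$. Since the Lie algebroid structure of a Dirac structure --- ordinary or on the bialgebroid $(TM,T^*M,\pi)$ --- is nothing but the restriction of the ambient Courant bracket and anchor, and $\widetilde\Phi$ intertwines both of these, $\Phi=\widetilde\Phi|_L:L\to\Phi(L)$ is an isomorphism of Lie algebroids. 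Finally, $\Phi(L)\cap TM$ consists exactly of the elements $\widetilde\Phi(X+\al)$ with vanishing $T^*M$-component, that is, with $\al=0$, and then $\widetilde\Phi(X+0)=X$; hence $\Phi(L)\cap TM=L\cap TM$, so $L$ and $\Phi(L)$ have the same characteristic distribution and therefore the same characteristic foliation $\F$.

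For the last assertion, suppose that $L$ is reducible, so that $D_L=L\cap TM$ is regular and its foliation $\F$ is simple, with projection $p:M\to M/\F$. By the previous paragraph $\Phi(L)$ is then a reducible ordinary Dirac structure on $M$ with the same foliation $\F$ and the same projection $p$, so Proposition \ref{prop:pullback:reduced} provides a Lie algebroid isomorphism $\Phi(L)\simeq p^{!!}T^*(M/\F)$. Composing it with the isomorphism $\Phi:L\to\Phi(L)$ yields $L\simeq p^{!!}T^*(M/\F)$, as claimed.
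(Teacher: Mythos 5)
Your argument is correct, but it takes a genuinely different route from the paper's. The paper never leaves $L$: it expands the bialgebroid Courant bracket of two sections of $L$, using the isotropy of $L$ to rewrite the terms $-\Lie_Y\al+\tfrac12\d(\eval{\al,Y}-\eval{\be,X})$ and $-\Lie_\be X-\tfrac12\d_\pi(\eval{\al,Y}-\eval{\be,X})$ as $-i_Y\d\al$ and $-i_\be\d_\pi X$, then applies $\Phi$ and matches the result with the standard Courant bracket of the images via $\pi^\sharp\brr{\al,\be}_\pi=\brr{\pi^\sharp\al,\pi^\sharp\be}$; the anchor check is the same as yours. You instead extend $\Phi$ to the gauge transformation $\widetilde\Phi=e^\pi$ of all of $\Tt M$ and show that it is an isomorphism of Courant algebroids from the double of $(TM,T^*M,\pi)$ onto the standard one, and then transport structure. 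This proves a stronger intermediate statement (it treats all Dirac structures on the bialgebroid at once and makes the preservation of maximal isotropy and integrability automatic), and it is indeed a known fact in the Liu--Weinstein--Xu circle of ideas, so citing it is legitimate. The price is that the bracket identity must hold on all of $\Gamma(\Tt M)$, where isotropy is not available to simplify: your reduction to the cases $(X,Y)$, $(\al,\be)$, $(X,\be)$ via the Leibniz rules is sound --- note that the anomalous Leibniz terms on the two sides also match, precisely because $\widetilde\Phi(\d_\pi f+\d f)=\d f$, i.e.\ $\d_\pi f=-\pi^\sharp\d f$, the same identity you invoke --- but the mixed case is asserted rather than computed, and it is exactly the part of the verification that the paper's isotropy trick absorbs; it should be written out if you do not simply quote the Courant-algebroid isomorphism. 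The remaining steps (the identification $\Phi(L)\cap TM=L\cap TM$, and the deduction of the reducible case by composing with the isomorphism of Proposition \ref{prop:pullback:reduced}) agree with the paper.
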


\begin{proof}
The map $\Phi$ is clearly a bijection, so it remains to prove that it preserves  Lie algebroid structures.
Let $X+\al,Y+\be\in\Gamma(L)$, then
\begin{align*}
\Phi\brr{X+\al,Y+\be}_L&=\Phi(\brr{X,Y}+ \Lie^*_\al Y - i_\be \d_\pi X + \brr{\al,\be}_\pi + \Lie_X\be-i_Y\d \al)\\
&=\brr{X,Y}+ \Lie^*_\al Y - i_\be \d_\pi X + \pi^\sharp(\brr{\al,\be}_\pi
+ \Lie_X\be-i_Y\d \al)\\
&+ \brr{\al,\be}_\pi + \Lie_X\be-i_Y\d \al.
\end{align*}
Since $\pi$ is a Poisson tensor, we have $\pi^\sharp\brr{\al,\be}_\pi=\brr{\pi^\sharp\al,\pi^\sharp\be}$ and
\begin{align*}
\Phi\brr{X+\al,Y+\be}&=\brr{X+\pi^\sharp \al,Y+\pi^\sharp\be} + \Lie_{X+\pi^\sharp\al}\be-i_{Y+\pi^\sharp\be} \d \al\\
&=\brr{\Phi(X+\al),\Phi(Y+\be)}.
\end{align*}
Finally, notice that anchors are preserved:
$$
a_L(X+\al)=X+\pi^\sharp\al=a_{\Phi(L)}(X+\pi^\sharp\al + \al)=a_{\Phi(L)}\smalcirc\Phi(X+\al),\quad X+\al\in L.
$$
\end{proof}

Since modular classes of Lie algebroid isomorphisms vanish, we conclude again that:

\begin{cor}
The modular class of any reducible Dirac structure $L$ on a Poisson manifold Lie bialgebroid $(TM, T^*M,\pi)$ is obtained by  pulling back  the modular class of $T^*(M/\F_L)$.
\end{cor}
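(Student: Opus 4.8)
The plan is to reduce the corollary to Proposition \ref{prop:modular:class:reducible} using the isomorphism provided by Proposition \ref{prop:isomorphism:Dirac:pullback}. Given a reducible Dirac structure $L$ on the Lie bialgebroid $(TM,T^*M,\pi)$, Proposition \ref{prop:isomorphism:Dirac:pullback} tells us that $L$ is isomorphic, as a Lie algebroid, to $p^{!!}T^*(M/\F_L)$, where $\F_L$ is the characteristic foliation and $p:M\to M/\F_L$ the natural projection. First I would invoke the fact (stated in the excerpt, right after the definition of the modular class of a morphism) that Lie algebroid isomorphisms have vanishing modular class; equivalently, a Lie algebroid isomorphism $\Phi$ satisfies $\mod(A)=\Phi^*\mod(B)$. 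Applying this to the isomorphism $\Phi(L)\cong L$ of Proposition \ref{prop:isomorphism:Dirac:pullback}, and then to the isomorphism $L\cong p^{!!}T^*(M/\F_L)$, we get that $\mod L$ is carried, under these isomorphisms, to $\mod\bigl(p^{!!}T^*(M/\F_L)\bigr)$.

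Next I would apply Proposition \ref{prop:KLW:pullback}: since $p$ is a submersion, the modular class of the pullback morphism $(\Phi,p):p^{!!}T^*(M/\F_L)\to T^*(M/\F_L)$ vanishes, which by definition means $\mod\bigl(p^{!!}T^*(M/\F_L)\bigr)=\Phi^*\mod\bigl(T^*(M/\F_L)\bigr)$. Chaining this with the previous step shows that $\mod L$ is the pullback of $\mod T^*(M/\F_L)$ under the composite Lie algebroid morphism $L\to p^{!!}T^*(M/\F_L)\to T^*(M/\F_L)$. This is exactly the content of the corollary; in effect the proof is the same as that of Proposition \ref{prop:modular:class:reducible}, with the only new ingredient being that the starting Dirac structure lives on the twisted bialgebroid rather than on the standard Courant algebroid.

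The only point requiring a small remark is that the characteristic distribution of $L$ as a Dirac structure on $(TM,T^*M,\pi)$ and that of $\Phi(L)$ as an ordinary Dirac structure coincide, which is asserted in Proposition \ref{prop:isomorphism:Dirac:pullback}; this guarantees that ``reducible'' has the same meaning on both sides and that the foliation $\F_L$ and the map $p$ are unambiguous. I would also note that the Poisson structure induced on $M/\F_L$ is the same whether one reduces $L$ directly or reduces $\Phi(L)$, since $\Phi$ intertwines the two Lie algebroid structures and their anchors; hence the target $T^*(M/\F_L)$ of the pullback is the same Poisson Lie algebroid in both descriptions.

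I do not expect any genuine obstacle here: every ingredient has already been established, and the argument is a purely formal chaining of Propositions \ref{prop:KLW:pullback}, \ref{prop:pullback:reduced}, and \ref{prop:isomorphism:Dirac:pullback} together with the vanishing of the modular class of an isomorphism. If there is a subtlety at all, it is bookkeeping: making sure the various ``$\Phi$''s and ``$p$''s are composed in the correct order so that the final statement is literally $\mod L=(\text{that composite})^*\mod T^*(M/\F_L)$ in $H^1(L)$, rather than an identification only up to the isomorphisms. This is handled by observing that all the maps in sight are Lie algebroid morphisms over the relevant base maps, so their induced maps on cohomology compose as expected.
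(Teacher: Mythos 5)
Your proof is correct and follows the same route as the paper: transfer $\mod L$ through the Lie algebroid isomorphism $L\cong\Phi(L)\cong p^{!!}T^*(M/\F_L)$ of Proposition \ref{prop:isomorphism:Dirac:pullback} and then apply Proposition \ref{prop:KLW:pullback} (equivalently, Proposition \ref{prop:modular:class:reducible} applied to the ordinary reducible Dirac structure $\Phi(L)$). The paper's own justification is exactly this one-line chaining, so no further comment is needed.
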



\section{Modular class of a Dirac map}

We now turn our attention to the modular class of Dirac maps.

\subsection{Dirac maps}
We begin by recalling some definitions and properties of Dirac maps.

\begin{defn} Let $(M,L)$ and $(N,K)$ be two Dirac manifolds and $\phi:M\to N$ a smooth map.
\begin{enumerate}
\item[(i)] The \textbf{$b$-image}, or the backward image, of $K$ by $\phi$ is the bundle over $M$:
$$
\B_\phi(K)=\set{X+\phi^*\al\in T_x M\oplus T^*_x M:\, \phi_* X + \al \in K_{\phi(x)}, \, x\in M }.
$$
We say that $\phi:(M,L)\to (N,K)$ is a \textbf{$b$-Dirac map}, or a backward Dirac map, if
\begin{equation*}
L=\B_\phi(K).
\end{equation*}
\item[(ii)] The \textbf{$f$-image}, or the forward image, of $L$ by $\phi$ is the bundle over $M$:
$$
\F_\phi(L)=\set{\phi_*X+\al\in T_{\phi(x)}N\oplus T^*_{\phi(x)} N:\,  X + \phi^*\al \in L_{x}, \, x\in M }.
$$
We say that $\phi:(M,L)\to (N,K)$ is an \textbf{$f$-Dirac map}, or a forward Dirac map, if
\begin{equation*}
\phi^!K=\F_\phi(L).
\end{equation*}
\end{enumerate}
\end{defn}

In general, a forward Dirac map is not a backward Dirac map, nor vice-versa. However, for diffeomorphisms these two notions are equivalent. A diffeomorphism which is also a Dirac map is called a \textbf{Dirac diffeomorphism}.

\begin{ex}
Let $(M,L)$ be a Dirac manifold and $i:S\hookrightarrow M$ a submanifold.
Then $L|_S:=\B_i(L)$ is always a Dirac structure on $S$ and $i:(S, L|_S)\to (M,L)$ is a $b$-Dirac map. For example, if $\omega$ is a presymplectic form on $M$ and $L_\omega$ is its graph, then $L|_S$ is the graph of the pullback $i^*\omega$.
\end{ex}

\begin{ex}
For a reducible Dirac manifold $(M,L)$, the projection $p:M\to M/\F$ is both an $f$-Dirac and a $b$-Dirac map between $(M,L)$ and $(M/\F, \graf \pi_{M/\F})$.
\end{ex}

\begin{ex}(Poisson Dirac submanifold)
Let $(M,\pi_M)$ be a Poisson manifold and $i:S\hookrightarrow M$ a Poisson Dirac submanifold (see  \cite{CrainicLF}). This means that $S$ has a Poisson structure $\pi_S$ such that the symplectic foliation of $S$ is given by the intersection of the symplectic foliation of $M$ with $S$.
Then
  $i:(S,\graf \pi_S)\to (M,\graf \pi_M)$ is a $b$-Dirac map (and $TS\cap \pi_M(TS^0)=\set{0}$).
\end{ex}


The next two propositions show how $b$-Dirac maps and $f$-Dirac maps between reducible Dirac manifolds can be pushed to the quotient Poisson manifold.

\begin{prop}
A $b$-Dirac map between reducible Dirac manifolds that preserves the characteristic foliation induces a $b$-Dirac map between the graphs of the Poisson structures on the reduced manifolds.
\end{prop}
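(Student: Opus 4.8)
The plan is to push $\phi$ down to the reduced Poisson manifolds and then to identify the resulting backward image by functoriality. Throughout, write $\F_L,\F_K$ for the characteristic foliations, $p_M\colon M\to M/\F_L$ and $p_N\colon N\to N/\F_K$ for the (submersive) quotient projections, and $\pi_M,\pi_N$ for the induced Poisson bivectors on the quotients, so that $p_M\colon(M,L)\to(M/\F_L,\graf\pi_M)$ and $p_N\colon(N,K)\to(N/\F_K,\graf\pi_N)$ are both $b$-Dirac maps, by the example preceding this proposition.

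First I would construct the induced map. Saying that $\phi$ preserves the characteristic foliation means it carries each leaf of $\F_L$ into a leaf of $\F_K$; since the leaves of $\F_L$ are connected and are exactly the fibres of the submersion $p_M$, the composite $p_N\smalcirc\phi$ is constant along the fibres of $p_M$ and therefore descends to a unique smooth map $\bar\phi\colon M/\F_L\to N/\F_K$ with $\bar\phi\smalcirc p_M=p_N\smalcirc\phi$. The claim to be proved is then exactly $\B_{\bar\phi}(\graf\pi_N)=\graf\pi_M$.

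Next I would use that backward images compose contravariantly: for smooth maps $\psi_1\colon M\to N$, $\psi_2\colon N\to P$ and any subbundle $D\subset\Tt P$ one has $\B_{\psi_2\smalcirc\psi_1}(D)=\B_{\psi_1}(\B_{\psi_2}(D))$ fibrewise, which is immediate from the definition. Combining this with the fact that $\phi,p_M,p_N$ are $b$-Dirac maps and with $\bar\phi\smalcirc p_M=p_N\smalcirc\phi$ gives, fibrewise,
\begin{align*}
\B_{p_M}\bigl(\B_{\bar\phi}(\graf\pi_N)\bigr)
&=\B_{\bar\phi\smalcirc p_M}(\graf\pi_N)
=\B_{p_N\smalcirc\phi}(\graf\pi_N)
=\B_\phi\bigl(\B_{p_N}(\graf\pi_N)\bigr)\\
&=\B_\phi(K)=L=\B_{p_M}(\graf\pi_M).
\end{align*}
It remains to cancel $\B_{p_M}$. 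Since $p_M$ is a surjective submersion, $p_{M*}\colon T_xM\to T_q(M/\F_L)$ is onto for every $x\in p_M^{-1}(q)$, and hence any family $D\subset\Tt(M/\F_L)$ is recovered from $\B_{p_M}(D)$ fibrewise by $D_q=\{\,p_{M*}X+\al:\ X+p_M^*\al\in\B_{p_M}(D)_x\,\}$ (in the paper's notation, $\F_{p_M}(\B_{p_M}(D))=p_M^{!}D$). Applying this to the two ends of the chain above yields $\B_{\bar\phi}(\graf\pi_N)=\graf\pi_M$; in particular the left-hand side has constant rank, so $\bar\phi\colon(M/\F_L,\graf\pi_M)\to(N/\F_K,\graf\pi_N)$ is indeed a $b$-Dirac map.

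I expect the only genuinely delicate point to be this last cancellation step: since $\bar\phi$ need be neither a submersion nor an immersion, the pointwise backward image $\B_{\bar\phi}(\graf\pi_N)$ is a priori only a fibrewise-defined family whose rank might jump, so one cannot simply quote the composition formula with $\bar\phi$ in the outer slot and must instead argue fibre by fibre as above (the argument then also delivers smoothness of this family as a by-product). Everything else is a formal consequence of the definitions together with the already-recorded fact that the quotient projection of a reducible Dirac manifold is a $b$-Dirac map onto the graph of the reduced Poisson structure.
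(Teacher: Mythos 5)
Your proof is correct, but it follows a genuinely different route from the paper's. The paper fixes a characteristic pair $(D_L,\Pi)$ of $L$, takes a generic element $\pi_{M/\F_L}^\sharp(\al)+\al$ of the reduced graph, lifts it to $L$, uses the $b$-Dirac hypothesis to produce a covector $\be\in D_K^0$ with $\phi^*\be=p^*\al$, and pushes $\be$ down to $N/\F_K$; this element chase establishes one containment, with maximal isotropy implicitly finishing the job. You instead argue purely formally: the contravariant functoriality $\B_{\psi_2\circ\psi_1}(D)=\B_{\psi_1}(\B_{\psi_2}(D))$, the fact (recorded in the paper as an example) that the reduction projections $p_M,p_N$ are $b$-Dirac maps onto the graphs of the reduced Poisson structures, and the left-cancellation of $\B_{p_M}$ coming from $\F_{p_M}\smalcirc\B_{p_M}=\mathrm{id}$ for a surjective submersion. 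Your version buys a cleaner argument that avoids choosing a characteristic pair, yields the equality $\B_{\bar\phi}(\graf\pi_{N/\F_K})=\graf\pi_{M/\F_L}$ directly in both directions, and isolates the only genuinely analytic inputs (surjectivity of $p_{M*}$ and injectivity of $p_M^*$); the paper's version is more concrete and exhibits explicitly how the covectors transform under $\phi$ and the two projections. Both rely on the same unproved background facts about reducible Dirac manifolds, so neither is more self-contained than the other; your cancellation step is correctly justified and is indeed the one place where care is needed, since $\B_{\bar\phi}(\graf\pi_{N/\F_K})$ is a priori only a fibrewise family.
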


\begin{proof}
Let $\phi:(M,L)\to (N,K)$ be a $b$-Dirac map  preserving the characteristic foliations $\F_L$ and $\F_K$, i.e., $\phi(\F_L)\subset\F_K$. Then we have an induced map $\bar\phi:M/\F\to N/\F_K$ and we need to prove that:
\[ \B_{\bar\phi}(L_{\pi_{N/\F_K}})=L_{\pi_{M/{\F_L}}}. \]

For this, we choose a characteristic pair $(D|_L,\Pi)$ for $(M,L)$, let $x\in M$ and choose any element
\[ \pi_{M/\F_L}^\sharp(\al) + \al\in L_{\pi_{M/\F_L}}|_{p(x)}. \]
Then $p^*\al\in D_L^0$ and $\Pi^\sharp(p^*\al) + p^*\al\in L_x$. Since $\phi$ is a $b$-Dirac map, there exists $\be\in D_K^0$ such that $\phi^*\be=p^*\al$ and $\phi_*\Pi^\sharp(\phi^*\be) + \be\in K_{\phi(x)}$. If $q:N\to N/\F_K$ is the projection, then there is  $\tilde\be\in T_{q(\phi(x))}^*(N/\F_K)$ such that $q^*\tilde\be=\be$ and
$
q_*\phi_*\Pi^\sharp(\phi^* q^*\tilde\be)=\pi_{N/\F_K}\tilde\be.
$
Since $\bar\phi\smalcirc p=q\smalcirc\phi$, we have  $\bar\phi_*\pi_{M/\F}\bar\phi^*\tilde\be=\pi_{N/\F_K}\tilde\be$ and $p^*\al=\phi^*q^*\tilde\beta=p^*\bar{\phi}^*\tilde\beta$, which gives $\al=\bar{\phi}^*\tilde\beta$ (since $p$ is a submersion).

In conclusion, we showed that for any element
\[ \pi_{M/\F_L}^\sharp(\al) + \al\in L_{\pi_{M/\F_L}}\]
we can find  $\tilde\be\in T^*(N/\F_K)$ with $\al=\bar{\phi}^*\tilde\beta$ and
\[ \bar\phi_*\pi^\sharp_{M/\F_L}(\al) + \tilde\be\in L_{\pi_{N/\F_K}}.\]
proving our claim.
\end{proof}

An $f$-Dirac map always preserves the characteristic foliation, so in this case we have:

\begin{prop}
An  $f$-Dirac map between reducible Dirac manifolds induces a Poisson map between the reduced Poisson manifolds and, consequently, an f-Dirac map between the graphs of the Poisson structures on the reduced manifolds.
\end{prop}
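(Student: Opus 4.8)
The plan is to proceed in three steps: first to produce a smooth map $\bar\phi$ between the leaf spaces, then to prove that $\bar\phi$ is a Poisson map (the substance of the statement), and finally to translate this into the assertion that $\bar\phi$ is an $f$-Dirac map between the graphs, via the standard correspondence. For the first step I would observe that the inclusion $\F_\phi(L)\subseteq\phi^!K$ forced by the $f$-Dirac condition, applied to elements of $L$ with vanishing covector component, gives $\phi_*(D_L)\subseteq D_K$; hence $\phi$ carries each leaf of $\F_L$ into a leaf of $\F_K$, and since $p:M\to M/\F_L$ is a surjective submersion there is a unique smooth map $\bar\phi:M/\F_L\to N/\F_K$ with $\bar\phi\smalcirc p=q\smalcirc\phi$, where $q:N\to N/\F_K$ is the projection.

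For the second step I would use the identifications of admissible functions $p^*:C^\infty(M/\F_L)\to C_L^\infty(M)$ and $q^*:C^\infty(N/\F_K)\to C_K^\infty(N)$: these are Poisson-algebra isomorphisms and $p^*$ is injective, so, since $\phi^*\smalcirc q^*=p^*\smalcirc\bar\phi^*$, it is enough to show that $\phi^*$ maps $C_K^\infty(N)$ into $C_L^\infty(M)$ and intertwines the admissible brackets. Fix $h\in C_K^\infty(N)$ with $X_h+\d h\in\Gamma(K)$. Then $\d h=\rho_*(X_h+\d h)\in\Gamma(\rho_*(K))=\Gamma(D_K^0)$ by the characteristic equation $D_K^0=\rho_*(K)$, so $\d h$ annihilates $D_K$; together with $\phi_*(D_L)\subseteq D_K$ this shows that $\d(\phi^*h)=\phi^*\d h$ annihilates $D_L$, i.e. $\phi^*\d h\in\Gamma(D_L^0)=\Gamma(\rho_*(L))$. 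Choosing a characteristic pair $(D_L,\Pi)$ for $L$, the smooth vector field $Y_h:=\Pi^\sharp(\phi^*\d h)$ then satisfies $Y_h+\d(\phi^*h)\in\Gamma(L)$, so $\phi^*h\in C_L^\infty(M)$ with $Y_h$ as a Hamiltonian vector field. For $h_1,h_2\in C_K^\infty(N)$, the inclusion $\F_\phi(L)\subseteq\phi^!K$ applied to $Y_{h_1}+\phi^*\d h_1\in\Gamma(L)$ shows that $\phi_*Y_{h_1}+\d h_1$ lies in $K$ along $\phi$; subtracting $X_{h_1}+\d h_1\in\Gamma(K)$ shows that $\phi_*Y_{h_1}-X_{h_1}\smalcirc\phi$ takes values in $D_K$, and since $\d h_2$ annihilates $D_K$ we conclude
\[
\{\phi^*h_1,\phi^*h_2\}_M=\eval{\phi^*\d h_2,\,Y_{h_1}}=\phi^*\eval{\d h_2,\,X_{h_1}}=\phi^*\{h_1,h_2\}_N .
\]
Pulling this back through $p^*$ and using injectivity of $p^*$ gives that $\bar\phi$ is a Poisson map.

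For the third step I would invoke (and, if desired, verify directly) the standard fact that a smooth map of Poisson manifolds is a Poisson map precisely when it is an $f$-Dirac map between the graphs of the respective bivectors; concretely, $\F_{\bar\phi}(\graf\pi_{M/\F_L})=\set{\pi_{N/\F_K}^\sharp\ga+\ga:\ \ga\in T^*(N/\F_K)}=\bar\phi^!(\graf\pi_{N/\F_K})$, the second equality being exactly the identity $\bar\phi_*\,\pi_{M/\F_L}^\sharp(\bar\phi^*\ga)=\pi_{N/\F_K}^\sharp\ga$ obtained in the second step; in particular the forward image is a smooth subbundle and $\bar\phi$ is $f$-Dirac. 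The main obstacle is the second step, and within it the bracket identity: the content is that the two sources of discrepancy -- the ambiguity of a Hamiltonian vector field of $\phi^*h_1$ in the characteristic directions, and the difference between $\phi_*Y_{h_1}$ and $X_{h_1}\smalcirc\phi$ -- both take values in $D_K$ and are annihilated by $\d h_2\in\Gamma(D_K^0)$, which is precisely where the $f$-Dirac inclusion meets the characteristic equation $\rho_*(K)=D_K^0$.
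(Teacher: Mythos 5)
Your proposal is correct and follows essentially the same route as the paper: show that $\phi$ pulls back $K$-admissible functions to $L$-admissible functions and preserves the induced Poisson brackets of admissible functions, then descend through the identifications $p^*$ and $q^*$. The only difference is that you supply details the paper leaves implicit, namely the construction of a smooth Hamiltonian vector field for $\phi^*h$ via a characteristic pair and the observation that the discrepancy $\phi_*Y_{h_1}-X_{h_1}\smalcirc\phi$ lies in $D_K$ and is annihilated by $\d h_2\in\Gamma(D_K^0)$.
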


\begin{proof}
Let $\phi:(M,L)\to (N,K)$ be an $f$-Dirac map between reducible Dirac manifolds, i.e., for each $x\in M$,
$$
K_{\phi(x)}=\set{\phi_*X+\be\in T_{\phi(x)}N \oplus T^*_{\phi(x)}N: \, X+\phi^*\be\in L_x}=\F_\phi(L_x).
$$
It is clear that if $X\in D_L$ then $\d\phi(X)\in D_K$, so $\phi$ induces a map $\bar\phi:M/\F\to N/\F_K$.

Let $f,g\in C^\infty_K(N)\cong C^\infty(N/{\F_K})$, then $f\smalcirc \phi$ and $g\smalcirc\phi$ are $L$-admissible functions. Moreover
\begin{eqnarray*}
\set{f\smalcirc\phi, g\smalcirc	\phi}_L(x)&=&\eval{X_{f\smalcirc\phi}(x),d_x (g\smalcirc\phi)}=
\eval{\phi_* X_{f\smalcirc\phi}(x), d_{\phi(x)} g}\\
&=&\eval{X_f,d g} (\phi(x))=\set{f,g}_K(\phi(x)).
\end{eqnarray*}

This shows that $\phi$ preserves the Poisson bracket between admissible functions and, consequently, the map $\bar\phi:{M/{\F_L}}\to {N/{\F_K}}$ induced by $\phi$ is a Poisson map.
\end{proof}

Since Poisson maps induce Lie algebroid comorphisms between cotangent bundles \cite{Mackenzie} we conclude that:

\begin{cor} An f-Dirac map between reducible Dirac structures induces a Lie algebroid comorphism between the cotangent bundles of the reduced manifolds.
\end{cor}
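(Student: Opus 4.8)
The plan is to obtain this corollary as an immediate consequence of the previous Proposition together with the classical construction, recalled in the Example of the subsection on comorphisms, that assigns to a Poisson map a comorphism of cotangent bundles. By that Proposition, an $f$-Dirac map $\phi:(M,L)\to(N,K)$ between reducible Dirac manifolds induces a Poisson map $\bar\phi:M/\F_L\to N/\F_K$, where the reduced manifolds carry the Poisson structures $\pi_{M/\F_L}$ and $\pi_{N/\F_K}$ induced by $L$ and $K$. (Recall that $\bar\phi$ is smooth because $p^*$ and $q^*$, with $p:M\to M/\F_L$ and $q:N\to N/\F_K$ the projections, identify the admissible functions with $C^\infty(M/\F_L)$ and $C^\infty(N/\F_K)$, and $\bar\phi^*$ is then read off from $\phi^*$.)

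Next I would invoke the standard fact (see, e.g., \cite{Mackenzie}) that a Poisson map $\psi:(P,\pi_P)\to(Q,\pi_Q)$ gives a Lie algebroid comorphism $\Psi:\psi^!T^*Q\to T^*P$ between the cotangent Lie algebroids, determined on sections by $\alpha^!=\alpha\smalcirc\psi\mapsto\psi^*\alpha$ for $\alpha\in\Omega^1(Q)$. The two comorphism axioms are checked exactly as in the comorphism subsection: the anchor compatibility $\psi_*\smalcirc\pi_P^\sharp\smalcirc\psi^*=\pi_Q^\sharp$ is precisely the statement that $\psi$ is a Poisson map, while the bracket compatibility $\bar\Psi[\alpha,\beta]_{\pi_Q}=[\bar\Psi\alpha,\bar\Psi\beta]_{\pi_P}$ follows from the Koszul bracket formula together with the fact that $\psi^*$ intertwines the de Rham differentials and the Lie derivatives along $\pi^\sharp$-related vector fields, using again that $\psi$ is Poisson. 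Applying this with $\psi=\bar\phi$ produces the comorphism $\bar\Phi:\bar\phi^!T^*(N/\F_K)\to T^*(M/\F_L)$ asserted in the statement.

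There is essentially no obstacle here: all the geometric content has already been absorbed into the previous Proposition, and the passage from a Poisson map to a comorphism of its cotangent bundles is classical. The only point to keep straight is the direction of the arrows — a comorphism covering $\bar\phi:M/\F_L\to N/\F_K$ runs from the pullback bundle $\bar\phi^!T^*(N/\F_K)$ over $M/\F_L$ into $T^*(M/\F_L)$, opposite to the direction of a Lie algebroid morphism — which is exactly the convention fixed in the comorphism subsection and is consistent with the principle that $f$-Dirac maps should generalise Poisson maps, and hence comorphisms, rather than Lie algebroid morphisms.
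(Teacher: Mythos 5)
Your proof is correct and follows exactly the paper's route: the corollary is obtained by combining the preceding Proposition (an $f$-Dirac map between reducible Dirac manifolds induces a Poisson map of the reduced Poisson manifolds) with the classical fact that a Poisson map induces a Lie algebroid comorphism between cotangent bundles. The extra details you supply on the comorphism axioms and the direction of the arrows are accurate and consistent with the conventions fixed in the paper.
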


\begin{prop}
If $\phi:(M,L)\to (N,K)$ is both an $f$-Dirac map and an immersion, then $\phi$ is also a $b$-Dirac map and defines a Lie algebroid comorphism between $L$ and $K$.
\end{prop}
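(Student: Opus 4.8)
The plan is to deduce the $b$-Dirac property by a pointwise argument and then to write the comorphism down by hand and verify its two axioms. First I would show $L=\B_\phi(K)$. Since $\phi$ is an immersion, $\phi_{*,x}$ is injective and $\phi^*_x\colon T^*_{\phi(x)}N\to T^*_xM$ is surjective for every $x\in M$; and the $f$-Dirac hypothesis says exactly that $K_{\phi(x)}=\set{\phi_*X+\al:\ X+\phi^*\al\in L_x}$ for all $x$. So, given $X+\xi\in L_x$, I would write $\xi=\phi^*\al$ using surjectivity of $\phi^*_x$; then $X+\phi^*\al\in L_x$ forces $\phi_*X+\al\in K_{\phi(x)}$, i.e. $X+\xi\in\B_\phi(K)_x$, whence $L_x\subseteq\B_\phi(K)_x$. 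A one-line computation with \eqref{eq:symmetric:bilinear:form}, using $\eval{\phi^*\al,X}=\eval{\al,\phi_*X}$ and the isotropy of $K_{\phi(x)}$, shows $\B_\phi(K)_x$ is isotropic; since $L_x$ is maximally isotropic, $L_x=\B_\phi(K)_x$, so $L=\B_\phi(K)$ and $\phi$ is a $b$-Dirac map.

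Next I would produce the comorphism $\Phi\colon\phi^!K\to L$. For $v\in K_{\phi(x)}$ the $f$-Dirac condition puts $\rho(v)\in\im\phi_{*,x}$, so by injectivity of $\phi_{*,x}$ there is a unique $X\in T_xM$ with $\phi_*X=\rho(v)$; set $\Phi_x(v)=X+\phi^*\rho_*(v)$. The same $f$-Dirac identity (again using injectivity) gives $\Phi_x(v)\in L_x$; the assignment is fibrewise linear and, because $\phi$ is an immersion, varies smoothly, so $\Phi$ is a vector bundle map covering $\mathrm{id}_M$. The anchor compatibility $\phi_*\circ a_L(\bar\Phi s)=a_K(s)$ is immediate from $a_L=\rho|_L$, $a_K=\rho|_K$ and $\phi_*X=\rho(v)$. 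For the bracket compatibility, I would write a section $s\in\Gamma(K)$ as $s=Z+\eta$ with $Z=\rho(s)\in\X(N)$, $\eta=\rho_*(s)\in\Omega^1(N)$; the $f$-Dirac condition forces $Z\circ\phi$ to take values in $\im\phi_*$, hence to be $\phi$-related to a (smooth, since $\phi$ is an immersion) vector field $W$ on $M$, and then $\bar\Phi s=W+\phi^*\eta$. Writing $s_i=Z_i+\eta_i$ with $W_i\sim_\phi Z_i$, one has $[W_1,W_2]\sim_\phi[Z_1,Z_2]$, and by injectivity of $\phi_*$ this $[W_1,W_2]$ is precisely the vector field on $M$ representing $[Z_1,Z_2]$; combining this with the naturality relations $\Lie_{W_i}\phi^*\eta_j=\phi^*\Lie_{Z_i}\eta_j$, $\eval{\phi^*\eta_i,W_j}=\phi^*\eval{\eta_i,Z_j}$ and $\phi^*\d=\d\phi^*$, and plugging into the Courant bracket \eqref{CourantBracketDef} restricted to $L$ and to $K$, one gets $[\bar\Phi s_1,\bar\Phi s_2]_L=\bar\Phi[s_1,s_2]_K$ term by term. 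Hence $\Phi$ is a Lie algebroid comorphism between $L$ and $K$.

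The main obstacle is bookkeeping rather than conceptual: a section of $K$ has no canonical restriction to a section of $\Tt M$, so one must use the $f$-Dirac hypothesis to confine its $TN$-component along $\phi$ to $\im\phi_*$ and then the immersion hypothesis to turn it into a genuine, smoothly varying vector field on $M$ that is $\phi$-related to the original one. Once this is arranged, the bracket identity reduces to the naturality of the Cartan calculus under $\phi$-related vector fields; the one point to watch is that the $f$-Dirac condition, together with injectivity of $\phi_*$, is what simultaneously confines the relevant vectors to $\im\phi_*$ and guarantees that the image $X+\phi^*\rho_*(v)$ actually lies in $L$.
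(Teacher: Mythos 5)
Your proposal is correct and follows essentially the same route as the paper: the comorphism is the same bundle map $Y+\be\mapsto(\phi_*)^{-1}Y+\phi^*\be$, with anchors checked via injectivity of $\phi_*$ and brackets via the naturality of the Cartan calculus under $\phi$-related vector fields. Your explicit isotropy-plus-maximality argument for $L=\B_\phi(K)$ is a welcome addition, since the paper's proof leaves the $b$-Dirac claim implicit.
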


\begin{proof}
Since $\phi$ is an $f$-Dirac map and an immersion, we may define the bundle map
$\ds \Psi:\phi^!K\to L$ given by:
$$
Y+\be \longmapsto  (\phi_*)^{-1} Y+\phi^*\be.
$$
To prove that $\Psi$ preserves brackets and anchors, for $X+\al,Y+\be\in\Gamma(K)$ we set:
\[ (X+\al)^!:=(X+\al)\smalcirc\phi\in\Gamma(\phi^!K), \quad (Y+\be)^!:=(Y+\be)\smalcirc\phi\in\Gamma(\phi^!K).\]
Then
\[ \phi_*\smalcirc a_L\smalcirc\Psi(X+\al)^!=\phi_*(\phi_*)^{-1}X = X=a_K(X+\al), \]
so $\Psi$ preserves anchors. Also, we find that:
\begin{align*}
\Psi\brr{(X+\al), (Y+\be)}^!&=\Psi\left(\brr{X,Y} + \Lie_X\be-\Lie_Y\al + d\eval{\al,Y}\right)^!\\
&= (\phi_*)^{-1}\brr{X,Y}^! + \phi^*\left(\Lie_X\be-\Lie_Y\al + \d\eval{\al,Y}\right)^!\\
&= \brr{(\phi_*)^{-1}X, (\phi_*)^{-1}Y} +  \Lie_{(\phi_*)^{-1} X}\phi^*\be+\\
&\qquad \qquad \qquad -\Lie_{(\phi_*)^{-1}Y}\phi^*\al + \d\eval{\phi^*\al,(\phi_*)^{-1}Y}\\
&= \brr{\Psi(X+\al)^! , \Psi(Y+\be)^!}
\end{align*}
so $\Psi$ also preserves brackets.
\end{proof}

The next proposition  shows that the pullback Lie algebroid of a Dirac manifold by a submersion is, in fact, a backward Dirac image.

\begin{prop}
Let $\phi:M\to N$ be a submersion and $(N,K)$ a Dirac manifold. Then $\mathcal{B}_\phi(K)$ is a Lie algebroid isomorphic to the pullback Lie algebroid $\phi^{!!}K$.
\end{prop}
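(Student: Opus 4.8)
The plan is to write down the obvious vector bundle map and check that it is an isomorphism of Lie algebroids. Recall that $K$ is a Lie algebroid over $N$ with anchor $\rho|_K$, so that
$$\phi^{!!}K=\set{(X,v)\in T_xM\times K_{\phi(x)}:\ \rho(v)=\phi_*X,\ x\in M}.$$
Define $\Psi:\phi^{!!}K\to\Tt M$ by $\Psi(X,v)=X+\phi^*(\rho_*v)$; equivalently, if $v=Y+\al\in K_{\phi(x)}$, so that necessarily $Y=\rho(v)=\phi_*X$, then $\Psi(X,Y+\al)=X+\phi^*\al$. First I would observe that the defining condition $\phi_*X+\al\in K_{\phi(x)}$ of $\B_\phi(K)$ is exactly $v\in K_{\phi(x)}$, so $\Psi$ takes values in $\B_\phi(K)$; and since $\phi$ is a submersion, $\phi^*_x$ is injective while $\phi_{*,x}$ is surjective, so $\Psi$ is fibrewise injective. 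Hence its image $\B_\phi(K)$ is a subbundle of $\Tt M$ of constant rank and $\Psi$ is a vector bundle isomorphism $\phi^{!!}K\to\B_\phi(K)$ over $\mathrm{id}_M$, with inverse $X+\phi^*\al\mapsto(X,\phi_*X+\al)$. A one-line computation using that $K$ is isotropic shows that $\B_\phi(K)$ is isotropic, so the skew-symmetric Courant bracket (\ref{CourantBracketDef}) restricts to $\Gamma(\B_\phi(K))$ and obeys the Leibniz rule there; with the anchor $\rho|_{\B_\phi(K)}$ this provides a candidate Lie algebroid structure, and it remains to see that $\Psi$ carries the pullback Lie algebroid structure of $\phi^{!!}K$ (which is a Lie algebroid by Proposition~\ref{prop:KLW:pullback}) onto it. This would prove at once that $\B_\phi(K)$ is a Lie algebroid and that it is isomorphic to $\phi^{!!}K$.

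For the anchors the claim is immediate, both being "take the $TM$-component". For the brackets, I would note that the difference $B(s_1,s_2):=\Psi\brr{s_1,s_2}_{\phi^{!!}K}-\brr{\Psi s_1,\Psi s_2}$, with the second bracket the Courant bracket, is $C^\infty(M)$-bilinear: $\Psi$ is $C^\infty(M)$-linear and intertwines the two anchors, and both brackets satisfy Leibniz with the respective anchor. So it suffices to check $B=0$ on a set of local generators of $\Gamma(\phi^{!!}K)$. Using the exact sequence $0\to\ker\phi_*\to\phi^{!!}K\to\phi^!K\to0$ together with the existence of local $\phi$-related lifts of vector fields on $N$ (here $\phi$ being a submersion is essential), such generators are: (i) the sections $(V,0)$ with $V$ a $\phi$-vertical vector field, for which $\Psi(V,0)=V$; and (ii) the sections $(X,k\smalcirc\phi)$ with $k=Z+\ga\in\Gamma(K)$ and $X$ a local vector field $\phi$-related to $Z=\rho(k)$, for which $\Psi(X,k\smalcirc\phi)=X+\phi^*\ga$.

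Finally I would carry out the three generator checks using only the standard identities $\Lie_X\phi^*\delta=\phi^*\Lie_Z\delta$ when $X$ is $\phi$-related to $Z$, $\eval{\phi^*\ga,Y}=\phi^*\eval{\ga,W}$ when $Y$ is $\phi$-related to $W$, $\d\phi^*=\phi^*\d$, and $\iota_V\phi^*\ga=0$, $\iota_V\phi^*\d\ga=0$ (hence $\Lie_V\phi^*\ga=0$) for $V$ $\phi$-vertical. The cases $(V_1,0),(V_2,0)$ and $(V,0),(X,k\smalcirc\phi)$ are then two-line verifications. The only case with content is $(X,k\smalcirc\phi),(Y,l\smalcirc\phi)$ with $k=Z+\ga$, $l=W+\delta$: the pullback bracket formula gives $\brr{(X,k\smalcirc\phi),(Y,l\smalcirc\phi)}_{\phi^{!!}K}=(\brr{X,Y},\brr{k,l}_K\smalcirc\phi)$, whose image under $\Psi$ is $\brr{X,Y}+\phi^*\eta$ with $\eta=\Lie_Z\delta-\Lie_W\ga+\frac{1}{2}\d(\eval{\ga,W}-\eval{\delta,Z})$ the $T^*N$-component of $\brr{k,l}_K$, while the Courant bracket (\ref{CourantBracketDef}) gives $\brr{X+\phi^*\ga,Y+\phi^*\delta}=\brr{X,Y}+\Lie_X\phi^*\delta-\Lie_Y\phi^*\ga+\frac{1}{2}\d(\eval{\phi^*\ga,Y}-\eval{\phi^*\delta,X})$, and the two coincide by the identities above. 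I do not expect any serious obstacle here; the points requiring care are the reduction to local generators (a general section of $\phi^!K$ is not globally a single $k\smalcirc\phi$, so one works locally and uses tensoriality of $B$) and the fact that $\B_\phi(K)$ is a smooth isotropic subbundle, both of which rest on $\phi$ being a submersion.
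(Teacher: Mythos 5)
Your proof is correct and rests on the same underlying map as the paper's: the bundle isomorphism $X+\phi^*\al \leftrightarrow (X,\phi_*X+\al)$ between $\B_\phi(K)$ and $\phi^{!!}K$, whose bijectivity uses that $\phi$ is a submersion ($\phi_*$ surjective, hence $\phi^*$ injective on covectors), followed by the verification that it intertwines anchors and brackets. Where you genuinely differ is in how the bracket check is organized: the paper expands an arbitrary section of $\phi^{!!}K$ in a frame $\sum_i f_i\otimes(x_i+\mu_i)$ of $\phi^!K$ and computes both brackets directly, whereas you first show that the defect $B(s_1,s_2)=\Psi\brr{s_1,s_2}_{\phi^{!!}K}-\brr{\Psi s_1,\Psi s_2}$ is $C^\infty(M)$-bilinear (isotropy of $\B_\phi(K)$ killing the $\eval{\cdot\,,\cdot}_+\,\d f$ correction in the Leibniz identity for the skew Courant bracket) and then evaluate it only on vertical sections $(V,0)$ and projectable lifts $(X,k\smalcirc\phi)$. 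This is a cleaner decomposition: it isolates the one computation with content (two $\phi$-related lifts) and replaces the paper's frame bookkeeping by tensoriality, at the modest cost of justifying that these sections locally generate (which your exact sequence argument does). One phrase should be reworded: isotropy of $\B_\phi(K)$ does \emph{not} by itself imply that the Courant bracket of two of its sections lies again in $\Gamma(\B_\phi(K))$ --- that closure is precisely the content of $B=0$, which your generator computation supplies --- so present the closure as a consequence of the computation rather than of isotropy. With that adjustment the argument is complete.
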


\begin{proof}
Since $\phi$ is a submersion, the vector bundle map 
$\displaystyle \Psi:\mathcal{B}_\phi(K)\to \phi^{!!}K$ given by:
$$
X+\phi^*\be\longmapsto(X,\phi_*X + \be),
$$
is a vector bundle  isomorphism. It remains to prove that it is a Lie algebroid morphism, if $\mathcal{B}_\phi(K)$ is equipped with the bracket and the anchor induced by $\Tt M$.

It is easy to see that $\Psi$ preserves anchors:
$$a^!\smalcirc\Psi(X+\al)= X= a_{\mathcal{B}_\phi(K)}(X+\al), \quad X+\al\in\mathcal{B}_\phi(K).$$

To prove that $\Psi$ also preserves Lie brackets let $X+\al, Y+\be\in\Gamma(\mathcal{B}_\phi(K))$ such that $\Psi(X+\al)=\sum_i f_i\otimes(x_i+\mu_i)$ and $\Psi(Y+\be)=\sum_i g_i\otimes(y_j+\gamma_j)$, for some $f_i,g_j\in C^\infty(M)$, $x_i+\mu_i, y_j+\gamma_j\in\Gamma(K)$. Then
$ \phi_*X=\sum_if_ix_i$, $\al=\sum_i f_i\phi^*\mu_i$, $ \phi_*Y=\sum_j g_j y_j$ and $\be=\sum_j g_j\phi^*\gamma_j$.

Therefore
\begin{align*}
\Psi(\brr{X+\al,Y+\be}_{\mathcal{B}_\phi(K)})&= \Psi\left(\brr{X,Y} + \Lie_X \left(\sum_jg_j\gamma_j\right)+\right.\\
&\qquad\qquad\qquad\qquad \left.- \Lie_Y\left(\sum_i f_i\mu_i\right) +d\eval{\sum_if_i\gamma_i,Y}\right)\\
=\Psi(\brr{X,Y} +&\sum_{i,j} f_i g_j \phi^*\left(\Lie_{x_i}\gamma_j - \Lie_{y_j}\mu_i + d\eval{\mu_i,y_j}\right)+\\
& \qquad \qquad  + \sum_j X(g_j)\phi^*\gamma_j - \sum_i Y(f_i)\phi^*\mu_i)\\
=(\brr{X,Y}, \phi_*&\brr{X,Y} + \sum_{i,j} f_i g_j \left(\Lie_{x_i}\gamma_j - \Lie_{y_j}\mu_i + d\eval{\mu_i,y_j}\right) + \\
& \qquad \quad \quad +\sum_j X(g_j)\gamma_j - \sum_i Y(f_i)\mu_i)\\
=[\Psi(X+\al),&\Psi(Y+\be)]_{\phi^{!!}K}.
\end{align*}
\end{proof}

\begin{cor}\label{prop:b:Dirac:map:pullback}
If a submersion $\phi:(M,L)\to (N,K)$ is a $b$-Dirac map then it is also an $f$-Dirac map and the Lie algebroid
 $L$ is isomorphic to the pullback $\phi^{!!}K$.
\end{cor}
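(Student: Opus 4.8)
The plan is to reduce everything to the preceding proposition plus one short pointwise computation that uses the submersion hypothesis. Since $\phi$ is a $b$-Dirac map we have $L=\B_\phi(K)$, and the preceding proposition asserts precisely that $\B_\phi(K)$ is a Lie algebroid isomorphic to $\phi^{!!}K$; this gives at once the second assertion, that $L\cong\phi^{!!}K$ as Lie algebroids. So the only thing left is to verify that $\phi$ is also an $f$-Dirac map, i.e.\ that $\F_\phi(L)=\phi^!K$, and for this I would argue fibrewise.

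First I would record the uniqueness built into the backward image: because $\phi$ is a submersion, $\phi^*$ is injective on covectors, so every element of $L_x=\B_\phi(K)_x$ is written in a \emph{unique} way as $X+\phi^*\be$, and necessarily $\phi_*X+\be\in K_{\phi(x)}$. With this in hand, the inclusion $\F_\phi(L)_x\subseteq K_{\phi(x)}$ is immediate: if $\phi_*X+\al\in\F_\phi(L)_x$ then $X+\phi^*\al\in L_x$, and by the uniqueness just noted $\phi_*X+\al\in K_{\phi(x)}$. For the reverse inclusion I would use surjectivity of $\phi_*$: given $Z+\gamma\in K_{\phi(x)}$, choose $X\in T_xM$ with $\phi_*X=Z$; then $\phi_*X+\gamma\in K_{\phi(x)}$, so $X+\phi^*\gamma\in\B_\phi(K)_x=L_x$, whence $Z+\gamma=\phi_*X+\gamma\in\F_\phi(L)_x$.

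This shows $\F_\phi(L)_x=K_{\phi(x)}=(\phi^!K)_x$ for every $x\in M$; in particular the $f$-image has locally constant rank, hence is a genuine subbundle, so $\phi$ is an $f$-Dirac map, and together with the first paragraph the corollary follows. I do not expect any real obstacle here: the one point worth flagging is that the submersion hypothesis enters twice in genuinely different guises — surjectivity of $\phi_*$ for one inclusion and injectivity of $\phi^*$ for the other — and that it is the identification with $\phi^!K$ that makes $\F_\phi(L)$ a subbundle, which is what the $f$-Dirac condition implicitly requires.
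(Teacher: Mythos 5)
Your proof is correct and follows the route the paper intends: the paper states this corollary without proof as an immediate consequence of the preceding proposition, and you correctly supply both the citation (giving $L=\B_\phi(K)\cong\phi^{!!}K$) and the short pointwise verification of $\F_\phi(L)=\phi^!K$, using surjectivity of $\phi_*$ and injectivity of $\phi^*$ exactly where they are needed.
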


We also recover Proposition \ref{prop:pullback:reduced} in this way:

\begin{cor}
Let $(M,L)$ be a reducible Dirac manifold and $p:M\to M/\F$ the projection, then $L$ is a Lie algebroid isomorphic with $p^{!!}L_{\pi_{M/\F}}\cong p^{!!}T^*(M/\F)$.
\end{cor}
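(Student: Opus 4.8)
The plan is to deduce this corollary directly from Corollary \ref{prop:b:Dirac:map:pullback}, combined with the observation recorded in the example above that, for a reducible Dirac manifold, the projection $p\colon M\to M/\F$ is a $b$-Dirac map onto $(M/\F,L_{\pi_{M/\F}})$. So the first step is simply to note that $p$ is a surjective submersion and a $b$-Dirac map: choosing a characteristic pair $(D_L,\Pi)$ we have $L=D_L\oplus\graf(\Pi^\sharp|_{D_L^0})$ with $p_*\Pi=\pi_{M/\F}$, while $\ker p_*=D_L$ and $\rho_*(L)=D_L^0$ consists exactly of the forms $p^*\al$, $\al\in\Omega^1(M/\F)$; feeding this into the defining equation $\B_p(L_{\pi_{M/\F}})=L$ gives the claim, which is precisely the content of that example.

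With this in hand, apply Corollary \ref{prop:b:Dirac:map:pullback} to the submersion $p\colon(M,L)\to(M/\F,L_{\pi_{M/\F}})$: since it is a $b$-Dirac map it is also an $f$-Dirac map, and $L$ is isomorphic as a Lie algebroid to the pullback $p^{!!}L_{\pi_{M/\F}}$. It then remains to identify $p^{!!}L_{\pi_{M/\F}}$ with $p^{!!}T^*(M/\F)$. For this I would invoke the example on graphs of Poisson bivectors: the graph $L_{\pi_{M/\F}}$ of $\pi_{M/\F}$ is isomorphic as a Lie algebroid over $M/\F$ to $T^*(M/\F)$, via $\pi_{M/\F}^\sharp\al+\al\mapsto\al$. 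Since the construction $\phi^{!!}(-)$ is functorial for Lie algebroid isomorphisms covering the identity of the base — an isomorphism $K\cong K'$ of Lie algebroids over $M/\F$ restricting to an isomorphism of the subbundles $p^{!!}K,p^{!!}K'\subset TM\oplus p^!K$ intertwining anchors and brackets — we obtain $p^{!!}L_{\pi_{M/\F}}\cong p^{!!}T^*(M/\F)$, and composing the two isomorphisms finishes the proof.

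The only point needing a word of care — and it is a very mild one — is the functoriality used in the last step: one must check that the isomorphism $L_{\pi_{M/\F}}\cong T^*(M/\F)$ sends the rank (hence the subbundle) condition defining $p^{!!}L_{\pi_{M/\F}}$ to the one defining $p^{!!}T^*(M/\F)$, i.e.\ that it is compatible with the anchors. This is immediate, since the anchor of $\graf\pi_{M/\F}$ is $\pi_{M/\F}^\sharp$ followed by projection to $T^*(M/\F)$, which is exactly the anchor of $T^*(M/\F)$ transported through the isomorphism. Everything else is a formal consequence of results already established above, so no genuine obstacle arises.
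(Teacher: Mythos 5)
Your argument is correct and is exactly the derivation the paper intends: the projection $p$ is a submersion and a $b$-Dirac map onto $(M/\F,\graf\pi_{M/\F})$ (as recorded in the earlier example), so Corollary \ref{prop:b:Dirac:map:pullback} gives $L\cong p^{!!}L_{\pi_{M/\F}}$, and the Lie algebroid isomorphism $L_{\pi_{M/\F}}\cong T^*(M/\F)$ passes through the pullback construction. Your extra care about compatibility of anchors in the last step is sound but not a genuine issue; nothing is missing.
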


\begin{cor}
Let $\phi: (M,L)\to (N,K)$ be both a submersion and $b$-Dirac map between reducible Dirac manifolds. Then it induces a Lie algebroid morphism and a Lie algebroid comorphism  between $T^*(M/{\F_L})$ and $T^*(N/{\F_K})$.
\end{cor}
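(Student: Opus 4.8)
The plan is to assemble the statement from the propositions already proved in this section, applying Corollary~\ref{prop:b:Dirac:map:pullback} twice: once to $\phi$ itself and once to the map it induces between the reduced manifolds.

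First I would use Corollary~\ref{prop:b:Dirac:map:pullback} to upgrade $\phi$: since $\phi$ is a submersion and a $b$-Dirac map, it is automatically an $f$-Dirac map. As an $f$-Dirac map it respects the characteristic distributions, $\phi_*D_L\subseteq D_K$, so it descends to a smooth map $\bar\phi:M/\F_L\to N/\F_K$ with $\bar\phi\smalcirc p=q\smalcirc\phi$, where $p:M\to M/\F_L$ and $q:N\to N/\F_K$ are the quotient submersions. The comorphism half of the statement is then immediate: being $f$-Dirac between reducible Dirac manifolds, $\phi$ induces a Poisson map between the reduced manifolds — which is precisely $\bar\phi$ — and a Poisson map induces a Lie algebroid comorphism $\bar\phi^!T^*(N/\F_K)\to T^*(M/\F_L)$, as recorded above.

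For the morphism half I would first check that $\bar\phi$ is again a submersion: from $\bar\phi\smalcirc p=q\smalcirc\phi$ the composite $q\smalcirc\phi$ is a submersion and $p$ is a surjective submersion, which forces $\bar\phi$ to be a submersion. Next, since $\phi$ is a $b$-Dirac map between reducible Dirac manifolds preserving the characteristic foliations, the proposition on $b$-Dirac descent shows that $\bar\phi$ is a $b$-Dirac map between the graphs $\graf\pi_{M/\F_L}$ and $\graf\pi_{N/\F_K}$ of the reduced Poisson structures. Thus $\bar\phi$ is at the same time a submersion and a $b$-Dirac map, so Corollary~\ref{prop:b:Dirac:map:pullback} applies to $\bar\phi$ and, using the standard identification of the graph of a Poisson bivector with the cotangent Lie algebroid, yields a Lie algebroid isomorphism $T^*(M/\F_L)\simeq\bar\phi^{!!}T^*(N/\F_K)$. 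Composing it with the canonical Lie algebroid morphism $\bar\phi^{!!}T^*(N/\F_K)\to T^*(N/\F_K)$ coming from the pullback construction produces the desired Lie algebroid morphism $T^*(M/\F_L)\to T^*(N/\F_K)$ covering $\bar\phi$, which together with the comorphism above finishes the argument.

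I do not expect a genuine obstacle here: the proof is essentially bookkeeping, checking that at each stage the hypotheses ``reducible'', ``foliation-preserving'' and ``submersion'' are in force and that the identifications $\graf\pi\cong T^*(\,\cdot\,)$ are used compatibly with the Lie algebroid brackets. The only point requiring a short independent argument is that $\bar\phi$ inherits the submersion property from $\phi$, which is the elementary observation above about compositions of submersions through a surjective submersion.
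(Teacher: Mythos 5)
Your argument is correct and is exactly the derivation the paper intends (it states this as a corollary without proof, to be assembled from the preceding results): Corollary~\ref{prop:b:Dirac:map:pullback} upgrades $\phi$ to an $f$-Dirac map, the $f$-Dirac descent proposition gives the Poisson map $\bar\phi$ and hence the comorphism, and the $b$-Dirac descent proposition plus a second application of Corollary~\ref{prop:b:Dirac:map:pullback} to the submersion $\bar\phi$ gives the morphism via $T^*(M/\F_L)\simeq\bar\phi^{!!}T^*(N/\F_K)$. Your observation that $\bar\phi$ inherits the submersion property is the one small point the paper leaves implicit, and you handle it correctly.
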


\subsection{Modular classes of Dirac maps}
We are able to compare the modular classes of Dirac manifolds related by Dirac maps, for the following special class:

\begin{defn}
A Dirac map $\phi:(M,L)\to (N,K)$ is called \textbf{admissible} if
\begin{equation*}
\R_{(x,\phi(x)) }=\set{\left(X+\phi^*\al,\phi_* X+\al\right): X+\phi^*\al\in L_x, \, \phi_*X+\al\in K_{\phi(x)}} \quad (x\in M),
\end{equation*}
defines a vector subbundle of $\Tt M\times \Tt N$ over $\graf \phi$.
\end{defn}

\begin{ex}
An  $f$-Dirac map $\phi:(M,L)\to (N,K)$ such that $L \cap \ker\phi_*$ has constant rank  is admissible. In particular, if $L$ is the graph of a Poisson bivector, then $\phi$ is admissible.
\end{ex}

\begin{ex}
A  $b$-Dirac map $\phi:(M,L)\to (N,K)$  such that $\ker\phi^* \cap \phi^! K$  has constant rank  is admissible. In particular, if $K$ is the graph of a pre-symplectic  form, then $\phi$ is admissible.
\end{ex}

For any smooth map $\phi:M\to N$, the vector subbundle of $\Tt M\times \Tt N$
$$
\Gamma^\phi=\set{(v+\phi^*\al, \phi_*v+\al)\in \Tt_x  M \times \Tt_{\phi(x)}N: \, x\in M}\subset \Tt(M\times N)
$$
is a Dirac structure supported on $\graf \phi$ \cite{AXu,BPS}. It has a natural Lie algebroid structure over  $\graf \phi$, where the anchor is the projection
$$
a\left(X+\phi^*\al,\phi_* X+\al\right)=(X,\phi_* X),
$$
and the  Lie bracket on sections is induced by the Courant bracket on $\Tt (M\times N)$.

\

Notice that for an admissible map $\phi$, the Lie algebroid structure on $\Gamma^\phi$ induces  a natural Lie  algebroid structure on $\ds \R$  (over $\graf \phi$) and we have:
\begin{prop}
Let $\phi:(M,L)\to (N,K)$ be an admissible Dirac map.
The vector bundle $\R$ has a Lie algebroid structure over $\graf \phi$ and
the projection maps
\begin{equation}
\label{diag:mod:class}
\xymatrix{
\R\ar[r]^i \ar[dr]_{j}& L \\
&K}
\end{equation}
are Lie algebroid morphisms.
\end{prop}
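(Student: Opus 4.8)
The plan is to realize $\R$ as the intersection of two Dirac structures and to deduce the assertions from the integrability of $L$, $K$ and of $\Gamma^\phi$. Let $L\times K$ denote the product Dirac structure on $M\times N$, with fibre $L_x\oplus K_y$ over $(x,y)$. Its sections over $M\times N$ are generated over $C^\infty(M\times N)$ by the pullbacks of sections of $L$ (placed in the first factor) and of $K$ (placed in the second); such generators bracket componentwise, brackets between the two kinds vanish, and, $L$ and $K$ being isotropic, the pairing terms in the Leibniz rule vanish as well. Hence $L\times K$ is integrable, i.e.\ a Lie algebroid over $M\times N$ — the product Lie algebroid — whose two factor projections $L\times K\to L$ and $L\times K\to K$, covering $\pr_M$ and $\pr_N$, are Lie algebroid morphisms. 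Comparing fibres over $\graf\phi$, an element $(X+\phi^*\al,\ \phi_*X+\al)$ of $\Gamma^\phi$ lies in $L\times K$ exactly when $X+\phi^*\al\in L_x$ and $\phi_*X+\al\in K_{\phi(x)}$, that is,
\[
\R \;=\; \Gamma^\phi \cap (L\times K)|_{\graf\phi}.
\]
By definition, admissibility of $\phi$ is precisely the statement that this intersection has constant rank, so $\R$ is a smooth vector subbundle of $\Gamma^\phi\to\graf\phi$.

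Next I would show that $\R$ is a Lie subalgebroid of $\Gamma^\phi$ and, in fact, that the inclusion $\R\hookrightarrow L\times K$ is a Lie algebroid morphism. The anchor causes no trouble: that of $\Gamma^\phi$ already takes values in $T(\graf\phi)$, so it restricts to $\R$, with value $(X,\phi_*X)$ at $(X+\phi^*\al,\phi_*X+\al)$. For the bracket, a section of $\R$ is a section of $\Gamma^\phi$ whose values lie in the honest subbundle $(L\times K)|_{\graf\phi}$, hence it extends to a section of $L\times K$ on a neighbourhood of $\graf\phi$ in $M\times N$. Computing the Lie bracket of $\Gamma^\phi$ by means of such extensions — which is legitimate thanks to the construction of the Lie algebroid structure on the Dirac structure with support $\Gamma^\phi$ of \cite{AXu,BPS} — the integrability of $L\times K$ forces the bracket of two sections of $\R$ to be again a section of $L\times K$; being simultaneously a section of $\Gamma^\phi$, it is a section of $\R$. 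This both equips $\R$ with the Lie algebroid structure inherited from $\Gamma^\phi$ and exhibits $\R$ as a Lie subalgebroid of $L\times K$ supported on $\graf\phi$, so that $\R\hookrightarrow L\times K$ is a Lie algebroid morphism.

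It then only remains to observe that the maps of the statement factor through the factor projections,
\[
i\colon\R\hookrightarrow L\times K\to L,\qquad j\colon\R\hookrightarrow L\times K\to K,
\]
covering $\graf\phi\hookrightarrow M\times N\xrightarrow{\pr_M}M$ and $\graf\phi\hookrightarrow M\times N\xrightarrow{\pr_N}N$; as composites of Lie algebroid morphisms, $i$ and $j$ are Lie algebroid morphisms. (Anchor compatibility is also visible directly: the anchor of $\R$ at $(X+\phi^*\al,\phi_*X+\al)$ is $(X,\phi_*X)$, which the differentials of $\pr_M$ and $\pr_N$ send to $X$ and $\phi_*X$, the anchors of $L$ and $K$ on $i$ and $j$ of that element.) The step I expect to be the main obstacle is the bracket argument of the middle paragraph: since $\Gamma^\phi$ is a Dirac structure supported on the submanifold $\graf\phi$ rather than on $M\times N$, one must argue carefully that its Lie bracket can be evaluated using local extensions that stay inside $L\times K$ — it is this that brings the integrability of the product Dirac structure into play — and the justification relies on the treatment of $\Gamma^\phi$, and of brackets of Dirac structures with support, in \cite{AXu,BPS}. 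Concretely one can instead use the generating ``graph sections'' of $\Gamma^\phi$, of the form $(v,\mathfrak{n})$ with $v$ a vector field on $M$ and $\mathfrak{n}\in\Gamma(\Tt N)$ such that $v$ is $\phi$-related to $\rho(\mathfrak{n})$; the Lie bracket of $(v_1,\mathfrak{n}_1)$ and $(v_2,\mathfrak{n}_2)$ has the shape $([v_1,v_2],[\mathfrak{n}_1,\mathfrak{n}_2]_{\Tt N})$, so when both lie in $\R$ its $\Tt N$-component is in $K$ (by integrability of $K$) and its $\Tt M$-component is in $L$ (by integrability of $L$), and since such sections generate $\Gamma(\R)$ over $C^\infty(\graf\phi)$ and the bracket satisfies the Leibniz rule, this completes the proof.
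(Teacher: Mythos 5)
Your argument is correct, and it actually supplies a proof where the paper gives none: the text merely asserts that ``the Lie algebroid structure on $\Gamma^\phi$ induces a natural Lie algebroid structure on $\R$'' and then states the proposition. Your key step --- the identification $\R=\Gamma^\phi\cap(L\times K)|_{\graf\phi}$, so that admissibility is exactly the constant-rank condition on this intersection --- is the right way to make that remark precise. Extending sections of $\R$ to sections of the product Dirac structure $L\times K$ near the closed embedded submanifold $\graf\phi$, invoking integrability of $L\times K$ to keep the Courant bracket inside $L\times K$, and invoking the extension-independence of the bracket of the Dirac structure with support $\Gamma^\phi$ (from \cite{AXu,BPS}) to keep it inside $\Gamma^\phi$, correctly yields closure of $\Gamma(\R)$ under the bracket; factoring $i$ and $j$ through $\R\hookrightarrow L\times K\to L,K$ then gives the morphism property at once, with the anchor compatibility checked directly. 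Two small caveats: the well-definedness of the $\Gamma^\phi$-bracket via extensions uses that the anchor of $\Gamma^\phi$ is tangent to $\graf\phi$, which you should state explicitly since it is what lets you choose your particular extensions inside $L\times K$; and the closing ``graph sections'' alternative is only heuristic as written --- the componentwise form of the bracket of such sections depends on how one realizes $\Gamma^\phi$ inside $\Tt(M\times N)$ (including the sign conventions on the second factor needed for isotropy), so I would rely on the extension argument of your middle paragraph, which is complete on its own.
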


The Lie algebroid morphisms $i$ and $j$ induce morphisms at the cohomology level $i^*:H^*(L)\to H^*(\R)$ and $j^*:H^*(L)\to H^*(\R)$. This leads to a natural definition of modular class of Dirac maps, which is a class comparing the modular classes of $L$ and $K$ under a Dirac map.

\begin{defn}
The \textbf{modular class of an admissible Dirac map} $\phi:(M,L)\to (N,K)$ is the class:
\begin{equation*}
\mod \phi= i^*\mod L-j^*\mod K\in H^1(\R).
\end{equation*}
\end{defn}

\begin{ex}
Let $\phi:(M,\pi_M)\to (N,\pi_N)$ be a Poisson map. Then $\phi$ defines an $f$-Dirac map between $(M,L_{\pi_M})$ and $(N,L_{\pi_N})$.
In this case, the Lie algebroid $\R$ is naturally isomorphic to the Lie algebroid $\phi^!(T^*N)$ and  the modular class of the Dirac map $\phi$ coincides with  the modular class of the comorphism defined by $\phi$ (under this isomorphism).
\end{ex}

\begin{ex}Let $(M,\pi_M)$ be a Poisson manifold and $\phi:S\hookrightarrow M$ a Poisson Dirac submanifold.
When  $\displaystyle \dim(T_xS^0\cap \ker\pi_M|_x)$ does not depend on $x\in S$, one says that $S$ is a Poisson Dirac submanifold of constant rank. In this case the inclusion $\phi:(S,L_{\pi_S})\to (M,L_{\pi_M})$ is an admissible b-Dirac map.

In order to see the meaning of the modular class of $\phi$ observe that it lies in the first cohomology group of the Lie algebroid (over $\graf \phi$):
  $$
  \R_{(x,\phi(x)) }=\set{\left(\pi_S^\sharp \phi^* \al+\phi^*\al, \pi_M^\sharp \al+\al\right): \al\in \ker_x(\phi_*\pi_S^\sharp \phi^*-\pi_M)}, \quad x\in S
  $$
which is clearly isomorphic to the Lie algebroid (over $S$)
$$ \mathcal{K}=\ker (\phi_*\pi_S \phi^*-\pi_M)\subset \phi^!(T^*M)=T^*_SM.$$
Notice that this Lie algebroid  $\mathcal K$ is equipped with the Lie bracket
  $$
  \brr{\al^!, \be^!}=\brr{\al,\be}_M^!
  $$
  and the anchor $\ds a(\al^!)=\pi_M^\sharp(\al)$, where $\al^!=\al\smalcirc \phi\in \Gamma(\phi^!(T^*M))$ and $\be^!=\be\smalcirc \phi\in\Gamma(\phi^!(T^*M))$, for  $\al, \be\in \Omega^1(M)$. From diagram (\ref{diag:mod:class}) we have
  \begin{equation*}
\xymatrix{
\R\cong \mathcal{K}\ar[r]^{i} \ar[dr]_{j}& T^*S \\
&T^*M}
\end{equation*}
so we see that the modular class of $\phi$ is given by
$$ i^* \mod T^*S - j^* \mod T^*M=2(i^* \mod (S,\pi_S) - j^* \mod (M,\pi_M))\in H^1(\mathcal K).$$

The isomorphism above shows that when $\phi:S\hookrightarrow M$ is a Poisson submanifold, then $\mod\phi$ coincides with the relative modular class already introduced in \cite{CLF}.
\end{ex}

It follows from the definition that a Dirac diffeomorphism has vanishing modular class, since in that case $i$ and $j$ in (\ref{diag:mod:class}) are both Lie algebroid isomorphism. This is also clear from the following proposition:

\begin{prop}\label{prop:b:f:Dirac:map:vanish:modular:class}
Let $\phi:(M,L)\to (N,K)$ be a $b$-Dirac map, which is also a submersion. Then $\mod \phi=0$.
\end{prop}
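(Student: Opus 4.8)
The goal is to show that for a $b$-Dirac map $\phi:(M,L)\to(N,K)$ which is also a submersion, the modular class $\mod\phi = i^*\mod L - j^*\mod K \in H^1(\R)$ vanishes. The natural strategy is to identify $\R$ with a pullback Lie algebroid and invoke Proposition~\ref{prop:KLW:pullback}. Indeed, since $\phi$ is a submersion and a $b$-Dirac map, Corollary~\ref{prop:b:Dirac:map:pullback} tells us that $L\cong\phi^{!!}K$ and that $\phi$ is simultaneously an $f$-Dirac map. The first step is therefore to unravel what $\R$ is in this situation: for a $b$-Dirac map one has $L=\B_\phi(K)$, so $X+\phi^*\al\in L_x$ forces $\phi_*X+\al\in K_{\phi(x)}$ automatically, which means $\R_{(x,\phi(x))}=\{(X+\phi^*\al,\phi_*X+\al): X+\phi^*\al\in L_x\}$ projects \emph{isomorphically} onto $L$ via $i$. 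Thus $i:\R\to L$ is a Lie algebroid isomorphism, and in particular $i^*$ is an isomorphism on cohomology with $i^*\mod L$ corresponding to $\mod\R$ under this identification.

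The second step is to analyze $j:\R\to K$. Under the identification $\R\cong L\cong\phi^{!!}K$ from the previous paragraph, I claim that $j$ corresponds precisely to the canonical Lie algebroid morphism $\Phi:\phi^{!!}K\to K$ of the pullback construction. To see this, trace through the isomorphisms: an element $X+\phi^*\al\in L_x$ maps under $i^{-1}$ to $(X+\phi^*\al,\phi_*X+\al)\in\R$, which maps under $j$ to $\phi_*X+\al\in K_{\phi(x)}$; on the other hand the isomorphism $\B_\phi(K)\cong\phi^{!!}K$ of the proposition preceding Corollary~\ref{prop:b:Dirac:map:pullback} sends $X+\phi^*\al$ to $(X,\phi_*X+\al)$, and the canonical morphism $\phi^{!!}K\to K$ is projection onto the second factor, giving $\phi_*X+\al$. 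These agree, so indeed $j\circ i^{-1}$ is (conjugate to) the canonical morphism $\phi^{!!}K\to K$.

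The third step is to conclude. By Proposition~\ref{prop:KLW:pullback}, since $\phi$ is a submersion, the modular class of the canonical Lie algebroid morphism $\Phi:\phi^{!!}K\to K$ vanishes, i.e. $\mod(\phi^{!!}K)=\Phi^*\mod K$. Translating back through our isomorphisms, $\mod\R = i^*\mod L$ (since $i$ is an isomorphism) and also, writing $\psi:\R\to\phi^{!!}K$ for the composite isomorphism, $\mod\R = \psi^*\mod(\phi^{!!}K) = \psi^*\Phi^*\mod K = j^*\mod K$ because $j = \Phi\circ\psi$. Hence $i^*\mod L = j^*\mod K$, which is exactly $\mod\phi=0$.

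The only real point requiring care — and the place I expect to spend the most attention — is the bookkeeping in the second step: verifying that under the vector-bundle isomorphisms the two maps out of $\R$ genuinely match the morphisms $a^!$ (projection to $TM$, which is $i$ after identifying $L$ with $\phi^{!!}K$) and $\Phi$ (projection to $K$, which is $j$) in the pullback diagram. This is essentially a diagram chase through the definitions of $\B_\phi$, of $\phi^{!!}K$, and of the canonical morphisms, with no hard analysis, but one must be consistent about which copy of $L$ one is working with. Once the identification $\R\xrightarrow{\ \sim\ }\phi^{!!}K$ intertwining $i$ with $a^!$ and $j$ with $\Phi$ is pinned down, the vanishing of $\mod\phi$ is immediate from Proposition~\ref{prop:KLW:pullback}.
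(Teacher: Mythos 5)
Your proposal is correct and follows essentially the same route as the paper: identify $L\cong\phi^{!!}K$ via the corollary on backward images under submersions, and invoke Proposition~\ref{prop:KLW:pullback} to get $\mod L=\Psi^*\mod K$ with $\Psi\circ i=j$. You are in fact slightly more careful than the paper, which leaves implicit both that $i:\R\to L$ is an isomorphism (using injectivity of $\phi^*$ for a submersion) and that $j$ factors as the canonical pullback morphism composed with $i$.
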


\begin{proof}
Since $\phi$ is a submersion and a $b$-Dirac map, then by Corollary \ref{prop:b:Dirac:map:pullback} it is also an $f$-Dirac map and  $\phi^{!!}K\cong L$ (as Lie algebroids). By Proposition \ref{prop:KLW:pullback}, we know that the modular class of the pullback map
$$
\Psi:\phi^{!!}K\cong L\to K
$$
vanishes. This means  $\mod L=\Psi^*\mod K$ and we obtain:
$$\mod \phi=i^*\mod L - j^*\mod K=(\Psi\smalcirc i)^*\mod K-j^*\mod K=0.$$
\end{proof}

\begin{cor}
Let $(M,L)$ be a reducible Dirac manifold with characteristic foliation $\F$. The projection $p:(M,L)\to (M/\F,L_\pi)$ has vanishing modular class.
\end{cor}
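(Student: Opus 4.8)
The plan is to recognize the final Corollary as an immediate consequence of Proposition \ref{prop:b:f:Dirac:map:vanish:modular:class}. First I would recall that for a reducible Dirac manifold $(M,L)$ the projection $p:M\to M/\F$ is a submersion (this is part of the definition of reducibility) and that, by the example following the definition of Dirac maps, $p:(M,L)\to (M/\F,L_\pi)$ is both an $f$-Dirac and a $b$-Dirac map. In particular $p$ is a $b$-Dirac map which is also a submersion, so Proposition \ref{prop:b:f:Dirac:map:vanish:modular:class} applies directly and yields $\mod p=0$.

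Concretely, the chain of reasoning is: since $p$ is a $b$-Dirac submersion, Corollary \ref{prop:b:Dirac:map:pullback} gives $L\cong p^{!!}L_\pi$ as Lie algebroids, with the pullback morphism $\Psi:L\cong p^{!!}L_\pi\to L_\pi$; Proposition \ref{prop:KLW:pullback} then says $\mod\Psi=0$, i.e. $\mod L=\Psi^*\mod L_\pi$; and finally, in the defining diagram (\ref{diag:mod:class}) for $\mod p$, the map $i:\R\to L$ composed with $\Psi$ equals $j:\R\to L_\pi$, so
\[
\mod p = i^*\mod L - j^*\mod L_\pi = (\Psi\smalcirc i)^*\mod L_\pi - j^*\mod L_\pi = 0.
\]

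There is essentially no obstacle here; the only point requiring a word of care is admissibility of $p$, since $\mod p$ is only defined for admissible Dirac maps. But $p$ is an $f$-Dirac map and the characteristic distribution $D_L = L\cap\ker p_* = L\cap TM$ has constant rank by the reducibility hypothesis, so the Example on admissible $f$-Dirac maps guarantees that $p$ is admissible and $\R$ carries its Lie algebroid structure. Thus the Corollary follows, and one may simply write: \emph{Immediate from Proposition \ref{prop:b:f:Dirac:map:vanish:modular:class}, since $p:M\to M/\F$ is a submersion and a $b$-Dirac map, and it is admissible because $D_L$ has constant rank.}
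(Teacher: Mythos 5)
Your proposal is correct and follows exactly the paper's intended route: the corollary is stated as an immediate consequence of Proposition \ref{prop:b:f:Dirac:map:vanish:modular:class}, using that $p$ is a submersion (by reducibility) and both a $b$- and $f$-Dirac map. Your extra check of admissibility (via $L\cap\ker p_*=D_L$ having constant rank) is a point the paper leaves implicit, and it is verified correctly.
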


\begin{ex}
Let $(M,\pi)$ be a Poisson manifold,  $(G,\pi_G)$ a Poisson-Lie group and
let $G\times M\to M$ be a Poisson action which is hamiltonian in the sense of Lu \cite{Lu1}. This means
that there exists a smooth map $\kappa:M\to G^*$ with values in the dual, 1-connected, Poisson-Lie group, which is equivariant
relative to the left dressing action of $G$ on $G^*$, and satisfies the moment map condition:
\begin{equation*}
\xi_M=\pi^\sharp(\kappa^*\xi^L),\quad \forall\ \xi\in\gg
\end{equation*}
where $\xi^L\in\Omega^1(G^*)$ is the left-invariant 1-form satisfying $\xi^L|_e=\xi\in T^*_eG^*=\gg$. 

When the action is proper and free, one has a unique quotient Poisson structure on $M/G$ for which the projection $\phi:M\to M/G$ is a surjective, Poisson submersion whose modular class vanishes (see \cite{CLF}). In this case the identity $e\in G^*$ is a regular value of $\kappa$ and the Dirac manifold $(M,L_\pi)$  pulls back to a Dirac structure $L_e$ on the level set $\kappa^{-1}(e)$ (see \cite{Bursztyn}). The characteristic distribution of $L_e$  coincides with the distribution tangent to the $G$-orbits on $\kappa^{-1}(e)$. This means that
$$\ds M//G:=\kappa^{-1}(e)/G$$
inherits  a Poisson structure $\pi_{\red}$ induced by $L_e$ and $\bar\phi=\phi_{|_{\kappa^{-1}(e)}}:(\kappa^{-1}(e),L_e)\to (M//G, L_{\pi_{M//G}})$ is both a $b$-Dirac map and an $f$- Dirac map. Moreover, by Proposition \ref{prop:pullback:reduced}, one has $\ds L_e\cong\bar\phi^{!!}T^*(M//G)$ and $\mod \bar\phi=0$. Therefore, we obtain a commutative diagram:
\[
\xymatrix{
     &M\ar[dr]\\
\kappa^{-1}(e)\ar[ur]\ar[dr]& & M/G\\
 & M//G\ar[ur]}
\]
where the inclusions are $b$-Dirac maps and the projections are both $b$-Dirac maps and $f$-Dirac maps, between the induced Dirac manifolds.

Now Proposition \ref{prop:b:f:Dirac:map:vanish:modular:class}  shows that the modular classes of the forward Dirac maps in the above diagram vanish. This gives a more direct and simple explanation for the result in \cite{CLF}, where we have already proved, without making use of Dirac structures, that the projection $\phi:M\to M/G$ has vanishing modular class (as a Poisson map) and that there exists a modular vector field on $M$, tangent to $\kappa^{-1}(e)$, whose projection  onto $M//G$ is a modular vector field of this Poisson manifold.
\end{ex}



\begin{thebibliography}{99}
\bibitem{AXu} Alekseev, A. and Xu, P.: Derived brackets and Courant algebroids, \emph{unpublished manuscript}.

\bibitem{BPS} Bursztyn, H.;  Ponte, D.~I. and S{e}vera, P.: Courant morphisms and moment maps,
    \emph{Math. Res. Lett.} \textbf{16} (2009), no. 2, 215–-232.
		
\bibitem{Bursztyn} Bursztyn, H.: A brief introduction to Dirac manifolds,
    \emph{Geometric and topological methods for quantum field theory}, 4–-38, Cambridge Univ. Press, Cambridge, 2013.

\bibitem{CLF} Caseiro R. and Fernandes, R.~L.: The modular class of a Poisson map,
 \emph{Ann. Inst. Fourier (Grenoble)} \textbf{63} (2013), no. 4, 1285–-1329.

\bibitem{RC} Caseiro, R.: The modular class of a Lie algebroid comorphism, \emph{Rev. Math. Phys.}, \textbf{25} (2013), no.~10, 1343001 (8 pages).


\bibitem{Courant} Courant, T.: Dirac manifolds,
   \emph{Trans.~Amer.~Math.~Soc.~} \textbf{319} (1990), no.~2, 631--661 (1990).

\bibitem{Crainic} Crainic, M.: Differentiable and algebroid cohomology, van Est isomorphisms, and characteristic classes.
	\emph{Comment.~Math.~Helv.~}\textbf{78} (2003), no.~4, 681--721 (2003).


\bibitem{CrainicLF}  Crainic, M. and Fernandes R. L.:  Integrability of Lie brackets, \emph{J.~of Differential Geometry} \textbf{66}, (2004) 71--137.

\bibitem{ELW} Evens, S.; Lu, J.-H. and Weinstein, A.: Transverse
  measures, the modular class and a cohomology pairing for Lie algebroids,
  \emph{Quart.~J.~Math.~Oxford Ser (2)} \textbf{50} (1999), no. 200, 417--436.
	
\bibitem{Fernandes1} Fernandes, R.L.: Lie algebroids, holonomy and characteristic classes.
 \emph{Adv.~Math.~}\textbf{170} (2002), no.~1, 119--179.



\bibitem{Grab} Grabowski, J.: Modular classes of skew algebroid relations,
   \emph{Transform. Groups} \textbf{17} (2012), no. 4, 989--1010.

\bibitem{GMM} Grabowski, J.; Marmo, G. and Michor, P.: Homology and modular classes of Lie algebroids,
  \emph{Ann.~Inst.~Fourier} \textbf{56} (2006), no. 1,  69--83.


\bibitem{YKS} Kosmann-Schwarzbach, Y.:
Poisson manifolds, Lie algebroids, modular classes: a survey,  \emph{SIGMA Symmetry Integrability Geom. Methods Appl.} \textbf{4} (2008), Paper 005, 30 pages.


\bibitem{KLW} Kosmann-Schwarzbach, Y.; Laurent-Gengoux, C. and Weinstein, A.: Modular classes of Lie algebroid morphisms, \emph{Transform.~Groups}  \textbf{13} (2008),  no. 3--4, 727--755.

	

\bibitem{Liu} Liu, Z.~J.: Some remarks on Dirac structures and Poisson reduction, \emph{Banach Center Publications}, \textbf{51}  (2000), no. 1,  165--173.


\bibitem{LWX} Liu, Z.-J.; Weinstein, A. and Xu, P.: Manin triples for Lie bialgebroids,
     \emph{J.~Diff.~Geom.~} \textbf{45} (1997), no. 3, 547--574.


\bibitem{Lu1} Lu, J.-H.: Momentum mappings and reduction of Poisson actions,
      in \emph{Symplectic geometry, groupoids, and integrable systems (S\'emin. Sud- Rhodan. Geom. VI, Berkeley, CA,  1989)}, Math. Sci. Res. Inst. Publ. \textbf{20}, 209--226, Springer, New York 1991.



\bibitem{Mackenzie} Mackenzie, K.:\emph{General Theory of Lie Groupoids and Lie algebroids}, London Mathematical Society Lecture Note Series \textbf{213}, Cambridge University Press 2005.

\bibitem{Weinstein1} Weinstein, A.: The modular automorphism group of a Poisson manifold,
  \emph{J.~Geom.~Phys.~} \textbf{23} (1997), no.~3-4, 379--394.



\end{thebibliography}
\end{document}